\title[Classifying spaces of compact Lie groups]{Classifying spaces of compact Lie groups that are\\$p$--compact for
all prime numbers}
\author{Kenshi Ishiguro}
\address{Fukuoka University\\\newline
Fukuoka 814-0180\\
Japan}
\email{kenshi@cis.fukuoka-u.ac.jp}
\urladdr{}
\def\cnewtheorem#1[#2]#3{\newtheorem{#1}{#3}[section]
\expandafter\let\csname c@#1\endcsname\c@thm}
\let\xysavmatrix\xymatrix
\def\xymatrix{\disablesubscriptcorrection\xysavmatrix}
\newtheorem{thm}{Theorem}[section]
\newtheorem{bigthm}{Theorem}
\newcommand{\Spin}{\mathit{Spin}}
\newcommand{\Pin}{\mathit{Pin}}
\newcommand{\Sp}{\mathit{Sp}}
\newcommand{\SO}{\mathit{SO}}
\newcommand{\OO}{\mathit{O}}
\def \F{\mathbb{F}}
\def \wt{\widetilde}
\def \2{^{\wedge}_2}
\def \3{^{\wedge}_3}
\def \p{^{\wedge}_p}
\newcommand{\lra}{{\longrightarrow}}
\begin{document}

\begin{abstract}
We consider a problem on the conditions of a compact Lie group $G$ that
the loop space of the $p$--completed classifying space be a $p$--compact
group for a set of primes.  In particular, we discuss the classifying
spaces $BG$ that are $p$--compact for all primes when the groups are
certain subgroups of simple Lie groups.  A survey of the $p$--compactness
of $BG$ for a single prime is included.
\end{abstract}

\maketitle

A $p$--compact group (see Dwyer--Wilkerson \cite{DWb}) is a loop space $X$ such that $X$ is 
$\F_p$--finite and that its classifying space $BX$ is $\F_p$--complete
(see Andersen--Grodal--M{\o}ller--Viruel~\cite{AGMV} and Dwyer--Wilkerson~\cite{DWn}).  
We recall that the $p$--completion of a compact Lie
group $G$ is a $p$--compact group if $\pi_0(G)$ is a $p$--group.  Next, if $C(\rho )$ denotes the
centralizer of a group homomorphism $\rho$ from a $p$--toral group to a compact Lie group, 
according to \cite[Theorem~6.1]{DWb},
the loop space of the $p$--completion $\Omega (BC(\rho ))\p$ is a $p$--compact group.

In a previous article \cite{Ig2}, the classifying space $BG$
is said to be {\it $p$--compact} if 
$\Omega (BG)\p$ is a $p$--compact group.  There are some results 
for a special case.  A survey is given in \fullref{sec1}. 
It is well-known that, if
$\Sigma_3$ denotes the symmetric group of order $6$, then
$B\Sigma_3$ is not $3$--compact.  In fact, for a finite group $G$, the classifying space $BG$
is $p$--compact if and only if $G$ is $p$--nilpotent.  
Moreover, we will see that $BG$ is $p$--compact toral (see Ishiguro
\cite{It}) if and
only if the compact Lie group $G$ is $p$--nilpotent (see Henn \cite{He}).  For the general case,
we have no group theoretical characterization, though a few necessary conditions are
available.  This problem is also discussed in the theory of $p$--local
groups (see Broto, Levi and Oliver \cite{BLO,BLOs}) from a different
point of view.

We consider the $p$--compactness of $BG$ for a set of primes.
Let $\Pi$ denote the set of all primes.  For a non--empty subset
$\mathbb{P}$ of 
$\Pi$, we say that $BG$ is {\it $\mathbb{P}$--compact} if this space is $p$--compact
for any $p \in \mathbb{P}$.  If $G$ is connected, then $\Omega (BG)\p \simeq G\p$
for any prime $p$, and hence
$BG$ is $\Pi$--compact.
The connectivity condition, however, is not necessary.  For instance, 
the classifying space of each
orthogonal group $\OO(n)$ is also $\Pi$--compact.
Since $\pi_0(\OO(n))=\Z /2$ is a $2$--group, $B\OO(n)$ is $2$--compact, and for any odd prime $p$,
the $p$--equivalences $B\OO(2m) \simeq_{p} B\OO(2m+1) \simeq_{p}
B\SO(2m+1)$ tell us that $B\OO(n)$ is $\Pi$--compact.

Next let $\mathbb{P} (BG)$ denote the set of primes $p$ such that $BG$ is $p$--compact.
In \cite{It} the author
has determined $\mathbb{P} (BG)$ when $G$ is the normalizer $NT$ of a maximal torus $T$ of a 
connected compact simple Lie group $K$ with Weyl group $W(K)$.  Namely 
$$
\mathbb{P}(BNT) =
\begin{cases}
\Pi
&\text{if } W(K) \text{ is a 2--group,}\\
\{ p \in \Pi~|~|W(K)| \not\equiv 0 \mod p \} &\text{otherwise.}
\end{cases}
$$
Other examples are given by a subgroup $H\cong SU(3)\rtimes \Z/2$ of 
the exceptional Lie group $G_2$ and its quotient group $\Gamma_2 =H/(\Z/3)$.  
$$
\CD
\Z /3 @= \Z /3 @>>> *   \\
@VVV         @VVV  @VVV           \\
SU(3) @>>> H @>>>  \Z /2    \\
@VVV         @VVV  @|        \\
PU(3) @>>> \Gamma_2 @>>> \Z /2 
\endCD
$$   
A result of \cite{Ig2} implies that $\mathbb{P} (BH) =\Pi$ and $\mathbb{P} (B\Gamma_2) =\Pi -\{ 3\}$.

In this paper we explore some necessary and sufficient
conditions for a compact Lie group to be $\Pi$--compact. 
First we consider a special case.  We say that $BG$ is 
{\it $\mathbb{P}$--compact toral} if
for each $p \in \mathbb{P}$ the loop space $\Omega (BG)\p$ is expressed as an
extension of a $p$--compact torus $T\p$ by a finite $p$--group $\pi$
so that that there is a fibration $(BT)\p \longrightarrow (BG)\p \longrightarrow B\pi$.  Obviously, if $BG$ is $\mathbb{P}$--compact toral, the space is $\mathbb{P}$--compact.
A necessary and sufficient condition that $BG$ be $p$--compact toral is given in \cite{It}.  
As an application, we obtain the following:

\begin{bigthm}
\label{thm1}
Suppose $G$ is a compact Lie group, and $G_0$ denotes its connected component with
the identity.  Then $BG$ is $\Pi$--compact toral if and
only if the following two conditions hold:
\begin{enumerate}
\item[\rm(a)] $G_0$ is a torus $T$, and the group $G/G_0=\pi_0G$ is nilpotent.
\item[\rm(b)] $T$ is a central subgroup of $G$.
\end{enumerate}
\end{bigthm}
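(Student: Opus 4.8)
The plan is to reduce the global assertion to the single--prime criterion of \cite{It} applied at every prime, and then to carry out an elementary local--to--global group--theoretic passage. By the very definition recalled above, $BG$ is $\Pi$--compact toral exactly when it is $p$--compact toral for every prime $p$, so I work one prime at a time and invoke the characterization of \cite{It} in the form: $BG$ is $p$--compact toral if and only if $G_0$ is a torus $T$, the finite group $\pi_0G$ is $p$--nilpotent, and the normal $p$--complement $O_{p'}(\pi_0G)$ acts trivially on $T$. Since the connected abelian group $T=G_0$ centralizes itself, the conjugation action of $G$ on $T$ factors through a homomorphism $\phi\colon \pi_0G \lra \mathrm{Aut}(T)$; thus condition (b), that $T$ be central in $G$, is equivalent to $\phi$ being trivial. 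This reformulation is what makes the two directions run in parallel.

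For necessity, assume $BG$ is $\Pi$--compact toral. Applying the criterion at a single prime shows immediately that $G_0=T$ is a torus, which is half of (a). Applying it at every prime shows that $\pi_0G$ is $p$--nilpotent for all $p$; since a finite group possessing a normal $p$--complement for every prime $p$ is the direct product of its Sylow subgroups and hence nilpotent, this completes (a). For (b) I exploit the Sylow decomposition $\pi_0G=\prod_q(\pi_0G)_q$ furnished by nilpotence: given a prime $q$, choose any prime $p\neq q$, so that the Sylow subgroup $(\pi_0G)_q$ is contained in $O_{p'}(\pi_0G)$; the criterion at $p$ forces $O_{p'}(\pi_0G)$, and in particular $(\pi_0G)_q$, to act trivially on $T$. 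As this holds for every $q$ and the Sylow subgroups generate $\pi_0G$, the homomorphism $\phi$ is trivial, so $T$ is central, establishing (b).

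For sufficiency, assume (a) and (b) and fix a prime $p$. Condition (a) gives $G_0=T$ a torus and $\pi_0G$ nilpotent, whence the splitting $\pi_0G=(\pi_0G)_p\times O_{p'}(\pi_0G)$ exhibits $\pi_0G$ as $p$--nilpotent with normal $p$--complement $O_{p'}(\pi_0G)$; condition (b) says all of $\pi_0G$ acts trivially on $T$, so in particular $O_{p'}(\pi_0G)$ does. The three hypotheses of the criterion thus hold and $BG$ is $p$--compact toral, and since $p$ was arbitrary it is $\Pi$--compact toral. Concretely the toral structure at $p$ arises by taking $N$ to be the preimage in $G$ of $O_{p'}(\pi_0G)$: then $N$ is normal with $G/N\cong(\pi_0G)_p$ a finite $p$--group, and since $T$ is central in $N$ with $\pi_0N$ of order prime to $p$, a transfer argument gives $H^*(BN;\F_p)\cong H^*(BT;\F_p)$ and hence $(BN)\p\simeq(BT)\p$; the Bousfield--Kan fiber lemma applied to $BN\lra BG\lra B(\pi_0G)_p$ then produces the required fibration $(BT)\p\lra(BG)\p\lra B(\pi_0G)_p$.

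The genuine analytic content of the statement is already packaged inside the cited single--prime criterion; granting it, the only real subtlety is the local--to--global step for condition (b), and this is where I expect to need the most care. The point is that being $p$--compact toral at one prime constrains only the $p'$--part of $\pi_0G$ --- the Sylow $p$--subgroup may act on $T$ through $\phi$ nontrivially, as the semidirect product $T\rtimes(\pi_0G)_p$ already shows --- so no single prime detects the full triviality of $\phi$. It is precisely the simultaneous validity over \emph{all} primes, combined with the product--of--Sylows structure supplied by nilpotence, that forces $\phi$ to be trivial and hence $T$ to be central in all of $G$.
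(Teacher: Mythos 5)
Your proof is correct and follows essentially the same route as the paper: the paper's Lemma~2.1 (which identifies $p$--compact torality of $BG$ with Henn's notion of $p$--nilpotence of $G$) is exactly your single--prime criterion, and both arguments then pass from $p$--nilpotence of $\pi_0G$ at every prime to nilpotence, deducing centrality of $T$ from the triviality of the action of each normal $p$--complement as $p$ varies. The only difference is cosmetic: where you re-derive the sufficiency direction by hand (transfer plus the Bousfield--Kan fiber lemma), the paper simply cites Henn's Proposition~1.3.
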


For a torus $T$ and a finite nilpotent group $\gamma$, the product group $G=T \times \gamma$
satisfies conditions (a) and (b).  Thus $BG$ is $\Pi$--compact toral.
\fullref{prop2.1} will show,
however, that a group $G$ with $BG$ being $\Pi$--compact toral need not be a
product group.

Next we ask if $BH$ is $\mathbb{P}$--compact when $H$ is a subgroup of a simple Lie group $G$.
For $\mathbb{P} =\Pi$, the following result determines certain types of $(G, H_0)$
where $H_0$ is the connected component of the identity.  We have seen the cases of
$(G, H)=(G, NT)$ when $W(G)=NT/T$ is a $2$--group, 
and of $(G, H)=(G_2, SU(3)\rtimes \Z/2)$ which is considered as a case with
$(G, H_0)=(G_2, A_2)$.  Recall that the Lie algebra of $SU(n+1)$ is simple of 
type $A_n$, and the Lie group $SU(3)$ is of $A_2$--type (see
Bourbaki~\cite{Bourbaki}).
 
\begin{bigthm}
\label{thm2}
Suppose a connected compact Lie group $G$ is simple.
Suppose also that $H$ is a proper closed subgroup of $G$
with $\rank(H_0)=\rank(G)$,
and that the map $BH \lra BG$ induced by the inclusion
is $p$--equivalent for some $p$.  Then the following hold:

\begin{enumerate}
\item[\rm(a)] If the space $BH$ is $\Pi$--compact,
$(G, H_0)$ is one of the following types:
\[
(G, H_0)=
\left\{
\begin{array}{llll}
(G, T_G)  &\mbox{for $G=A_1$ \ or \ $B_2(=C_2)$ } \\
(B_n, D_n) \\
(C_2, A_1 \times A_1) \\
(G_2, A_2)
\end{array}
\right.
\]
where $T_G$ is the maximal torus of $G$.

\item[\rm(b)] For any odd prime $p$, all above types are realizable.  Namely,
there are $G$ and $H$ of types as above such that $BH$ is $\Pi$--compact, together with
the $p$--equivalent map $BH \lra BG$.
When $p=2$, any such pair $(G, H)$ is not realizable.
\end{enumerate}
\end{bigthm}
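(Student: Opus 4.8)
The plan is to restate the hypothesis geometrically. The inclusion $H\hookrightarrow G$ deloops to a fibration $G/H\to BH\lra BG$, so the induced map $BH\lra BG$ is a $p$--equivalence (an isomorphism on $\F_p$--cohomology) if and only if the homogeneous space $G/H$ is $\F_p$--acyclic; here I use the Serre spectral sequence in one direction and an Eilenberg--Moore comparison in the other. Since $H$ is proper, $G/H$ is a closed manifold of positive dimension. I would first extract the numerical consequence: $\F_p$--acyclicity forces $\chi(G/H)=1$, and because $H_0$ is normal in $H$ the projection $G/H_0\to G/H$ is a free $\pi_0(H)$--covering, whence
\[ \chi(G/H_0)=[W(G):W(H_0)]=|\pi_0(H)|\cdot\chi(G/H)=|\pi_0(H)|. \]
The maximal--rank hypothesis $\rank(H_0)=\rank(G)$ is what makes $\chi(G/H_0)=[W(G):W(H_0)]$ and concentrates $H^*(G/H_0)$ in even degrees.

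Next I would pin down the prime. By mod--$2$ Poincar\'e duality every positive--dimensional closed manifold satisfies $H^{\dim}(G/H;\F_2)=\F_2\neq 0$, so $G/H$ is \emph{never} $\F_2$--acyclic and $BH\lra BG$ can be a $2$--equivalence only when $H=G$. This already settles the last assertion of (b): no pair of the listed types is realizable at $p=2$. It also shows that the prime appearing in the hypothesis of (a) is necessarily odd. For odd $p$, $\F_p$--acyclicity forces $H^{\dim}(G/H;\F_p)=0$, i.e.\ $G/H$ is not $\F_p$--orientable; since $G/H_0$ is orientable ($G$ is connected), the covering $G/H_0\to G/H$ must restrict to the orientation double cover, so some $\Z/2\subseteq\pi_0(H)$ acts by reversing orientation.

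The heart of (a) is then a dichotomy on $H_0$. If $H_0$ is the maximal torus $T_G$, then $T\subseteq H\subseteq NT$ and $\pi_0(H)$ embeds in $W(G)$; here I would invoke the computation of $\mathbb{P}(BNT)$ from \cite{It}, combined with the standing hypothesis that $BH$ is $\Pi$--compact, to force $W(G)$ to be a $2$--group, and for simple $G$ this leaves only $G=A_1$ and $G=B_2(=C_2)$ --- the first type on the list. If instead $H_0\supsetneq T_G$, I would show $G/H_0$ must be an even--dimensional sphere with $\pi_0(H)=\Z/2$, so that $G/H$ is a real projective space $\mathbb{R}P^{2m}$. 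Concretely, matching the $\F_p$--cohomology rings means matching the $W$--invariant subrings of $\F_p[BT]$, so the effective group $W(H_0)\rtimes\pi_0(H)$ must equal $W(G)$ in $GL(\Lie T)$; together with $|\pi_0(H)|=[W(G):W(H_0)]$ and the orientation--reversal above, this pins $\chi(G/H_0)=2$, i.e.\ a Weyl index equal to $2$. The Borel--de Siebenthal classification of maximal--rank subgroups of simple $G$ then yields exactly the homogeneous even spheres $(B_n,D_n)$, $(C_2,A_1\times A_1)$ and $(G_2,A_2)$. I expect this step --- extracting the sphere condition from acyclicity and ruling out larger component groups or non--spherical quotients --- to be the main obstacle, since $\Pi$--compactness alone does not exclude a $2$--group $\pi_0(H)$ larger than $\Z/2$, and the exclusion has to be read off the classification itself.

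Finally, for part (b) with $p$ odd I would exhibit a realization of each type. In every non--torus case take $H$ with $\pi_0(H)=\Z/2$, so that $G/H\cong\mathbb{R}P^{2m}$ and the vanishing $\widetilde H^{*}(\mathbb{R}P^{2m};\F_p)=0$ for odd $p$ gives the required $p$--equivalence $BH\lra BG$; in the torus cases take $H=NT$ with $W(G)$ a $2$--group. It remains to check that $BH$ is $\Pi$--compact. Since $\pi_0(H)=\Z/2$ is a $2$--group, the completion $(BH)\2$ is a $2$--compact group, while for an odd prime $q$ the order of $\pi_0(H)$ is invertible, so the fibration $BH_0\to BH\to B(\Z/2)$ with $\F_q$--acyclic base shows $(BH)^{\wedge}_q$ is $q$--compact; the torus cases are $\Pi$--compact directly by the $\mathbb{P}(BNT)=\Pi$ result of \cite{It}. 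Hence all four types are realizable for every odd prime, completing (b).
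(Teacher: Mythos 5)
Much of your outline is sound, and two pieces deserve credit: your mod $2$ Poincar\'e duality argument (a connected closed manifold $G/H$ of positive dimension has nonzero top mod $2$ cohomology, so it is never $\F_2$--acyclic and no $2$--equivalence $BH \lra BG$ exists for proper $H$) is more elementary than the paper's route, which instead combines its Lemma 3.1 with Lemma 3.2 (a connected $p$--compact group $\Omega (BK)\p$ forces $|\pi_0K|$ to be prime to $p$); and your realizations in part (b) are essentially the paper's own examples $\OO(2)\subset \SO(3)$, $NT\subset \Spin(5)$, $\OO(2n)\subset \SO(2n+1)$, $(\Sp(1)\times \Sp(1))\rtimes \Z/2\subset \Sp(2)$, $SU(3)\rtimes \Z/2\subset G_2$.

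Part (a), however, has a genuine gap, which you yourself flag. Every condition you actually derive from the hypotheses (matching invariant rings, $|\pi_0H|=[W(G):W(H_0)]$, existence of an orientation--reversing element of $\pi_0H$) is satisfied by $H=\Sp(1)\smallint \Sigma_n\subset \Sp(n)$ for $n\ge 3$ and by $H=\Spin(8)\rtimes \Sigma_3\subset F_4$: both contain the maximal torus normalizer of $G$, so $BH \lra BG$ is a $p$--equivalence whenever $p$ does not divide $|W(G)|$. Yet the types $(C_n, A_1\times \cdots \times A_1)$ for $n\ge 3$ and $(F_4, D_4)$ must be excluded from the list, so your framework as it stands cannot produce the stated answer. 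Your claim that orientation reversal ``pins $\chi(G/H_0)=2$'' is unjustified --- it only yields a surjection $\pi_0H \to \Z/2$ --- and deferring the exclusion to ``the classification itself'' fails, because these are perfectly good maximal--rank pairs with $W(H_0)$ normal in $W(G)$; Borel--de Siebenthal does not remove them. The missing idea is the one the paper uses: $\Pi$--compactness of $BH$ forces $\pi_0H$ to be $p$--nilpotent for every prime $p$, hence nilpotent \cite[Lemma~2.1 and Proposition~3.1]{It}; since $\pi_0H\cong W(G)/W(H_0)$ is $\Sigma_n$ ($n\ge 3$), respectively $\Sigma_3$, in the two offending families, they are killed, and Asano's classification \cite{A} of pairs in which $W(H_0)$ is a nontrivial proper normal subgroup of $W(G)$ then leaves exactly $(B_n,D_n)$, $(C_2,A_1\times A_1)$, $(G_2,A_2)$; the case $W(H_0)$ trivial gives $(G,T_G)$ with $W(G)$ nilpotent, hence $G=A_1$ or $B_2$. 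Two smaller points: you never rule out $W(H_0)=W(G)$, which the paper does via universal covers and \cite[Lemma~2.2]{Ia}; and in part (b) your fibration argument at odd $q$ is shaky, since $\Z/2$ can act non--nilpotently on $H^*(BH_0;\F_q)$ and the fibre lemma need not apply --- but the $q$--equivalence $BH \lra BG$ you already established makes that step unnecessary.
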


We make a remark about covering groups.  Note that if $\alpha \lra \wt G \lra G$ is
a finite covering, then $\alpha$ is a central subgroup of $ \wt G$.
For a central extension $\alpha \lra \wt G \lra G$ and a subgroup $H$ of $G$, we
consider the following commutative diagram:
$$
\CD
\alpha @>>> \wt G @>>> G \\
 @|  @AAA   @AAA               \\
\alpha @>>> \wt H @>>> H
\endCD
$$
\noindent Obviously the vertical map $H \lra G$ is the inclusion, and $\wt H$ is the induced
subgroup of $ \wt G$.  We will show that the pair $(G, H)$ satisfies the
conditions of \fullref{thm2}
if and only if its cover $(\wt G, \wt H)$ satisfies those of
\fullref{thm2}.  Examples of the type
$(G, H_0)=(B_n, D_n)$, for instance, can be given by
$(\SO(2n+1), \OO(2n))$ and the double cover $(\Spin(2n+1), \Pin(2n))$.  

For the case $(G, H_0)=(G_2, A_2)$, we have seen that $H$ has a finite normal subgroup $\Z/3$,
and that for its quotient group $\Gamma_2$ the classifying space $B\Gamma_2$ is $p$--compact
if and only if $p\ne 3$.  So $\mathbb{P} (B\Gamma_2) \ne \Pi$.  The following result
shows that this is the only case.  Namely, if $\Gamma$ is such a quotient group
for $(G, H_0) \ne (G_2, A_2)$, then $\mathbb{P} (B\Gamma) = \Pi$.

\begin{bigthm}
\label{thm3}
Let $(G, H)$ be a pair of compact Lie groups
as in \fullref{thm2}.  For a finite normal subgroup $\nu$ of $H$,
let $\Gamma$ denote the quotient group $H/\nu$.  If $(G, H_0) \ne (G_2, A_2)$,
then $B\Gamma$ is $\Pi$--compact.
\end{bigthm}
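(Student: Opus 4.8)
The plan is to verify the $p$--compactness of $B\Gamma$ one prime at a time, treating uniformly the three non--exceptional types of $(G,H_0)$ permitted by \fullref{thm2}(a). I would use two inputs recalled in the introduction: that the $p$--completion of a compact Lie group whose component group is a $p$--group is a $p$--compact group, and the computation $\mathbb{P}(BNT)=\Pi$ for the normalizer of a maximal torus whose Weyl group is a $2$--group. The two structural observations that drive everything are, first, that a connected group acts trivially on a finite normal subgroup, so $\nu\cap H_0\subseteq Z(H_0)$; and second, that in each admissible type $\pi_0H$ is a $2$--group, being a subgroup of $W(A_1)$ or $W(B_2)$ in the torus case and equal to $\Z/2$ in the types $(B_n,D_n)$ and $(C_2,A_1\times A_1)$, as one reads off from the examples $\OO(2n)\subset\SO(2n+1)$ and $(\Sp(1)\times\Sp(1))\rtimes\Z/2\subset\Sp(2)$.

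For the torus type $(G,H_0)=(A_1,T_G)$ or $(B_2,T_G)$ one has $H\subseteq NT_G$, since $H_0=T_G$ is normal in $H$. A finite normal subgroup of $H$ must lie in $T_G$: an element with nontrivial image $w\neq 1$ in the faithful Weyl action would, under conjugation by $T_G$, sweep out the image of the endomorphism $w^{-1}-1$ of $T_G$, a positive--dimensional subtorus, forcing $\nu$ to be infinite. Thus $\Gamma_0=T_G/\nu$ is again a torus and $\Gamma=N_{G/\nu}(T_G/\nu)$ is the normalizer of a maximal torus in $G/\nu$ with the unchanged Weyl group $W(G)$, a $2$--group. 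The quoted value of $\mathbb{P}(BNT)$ then gives $\mathbb{P}(B\Gamma)=\Pi$ directly.

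The main step is the semisimple types $(B_n,D_n)$ and $(C_2,A_1\times A_1)$, and here the key point is that $Z(H_0)$ is a $2$--group: the centers of $\Spin(2n)$, $\SO(2n)$ and $PSO(2n)$, and of the $A_1\times A_1$--type groups, are all $2$--primary. Together with $\nu\cap H_0\subseteq Z(H_0)$ and the fact that $\pi_0H$ is a $2$--group, this forces $\nu$ itself to be a $2$--group: an element of odd prime order maps trivially to $\pi_0H$, hence lies in $Z(H_0)$, which has no odd torsion. Granting this, I would finish by parity. For odd $p$ the finite $2$--group $\nu$ is a $p'$--group, so $H^*(B\nu;\F_p)=\F_p$ is concentrated in degree $0$, and the Serre spectral sequence of $B\nu\lra BH\lra B\Gamma$ (with trivial coefficients, as $\pi_0\Gamma$ acts trivially on $H^0$) shows that $BH\lra B\Gamma$ is a mod $p$ equivalence; since $BH$ is $\Pi$--compact, $\Omega(B\Gamma)\p\simeq\Omega(BH)\p$ is a $p$--compact group. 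For $p=2$ the group $\pi_0\Gamma$, a quotient of the $2$--group $\pi_0H$, is again a $2$--group, so $\Omega(B\Gamma)\2\simeq\Gamma\2$ is a $2$--compact group by the component--group criterion. Hence $\mathbb{P}(B\Gamma)=\Pi$ in every non--exceptional case.

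I expect the real work to be the bookkeeping that singles out $(G_2,A_2)$, rather than the $p$--completion arguments, which are then formal. The whole scheme rests on the two facts that $\pi_0H$ is a $2$--group and that $Z(H_0)$ is $2$--primary, and these are exactly what break down for $(G_2,A_2)$: there $Z(SU(3))=\Z/3$ is odd, so one may take $\nu=\Z/3$, the semisimple reduction fails, and the quotient reintroduces the $\Sigma_3$--type non--$3$--nilpotency responsible for $\mathbb{P}(B\Gamma_2)=\Pi-\{3\}$. The delicate points to be nailed down are therefore the computation of $Z(H_0)$ and of $\pi_0H$ in each admissible type, and the verification that a finite normal subgroup of a torus normalizer is contained in the torus; once these are in hand the argument closes as above.
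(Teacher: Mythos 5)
Your treatment of the semisimple types $(B_n,D_n)$ and $(C_2,A_1\times A_1)$ is, in substance, the paper's own proof: the paper isolates your observation that the part of $\nu$ meeting $H_0$ is a finite normal, hence central, subgroup of $H_0$ as \fullref{lem3.4}, combines it with the $2$--primality of $Z(H_0)$ and with $\pi_0H\cong W(G)/W(H_0)\cong \Z/2$ to conclude that $\nu$ is a $2$--group, and then finishes exactly as you do: Bousfield--Kan \cite{BK} at $p=2$, and at odd $p$ the fact that killing a finite $2$--group does not change the mod $p$ homotopy type. One small correction: $\pi_0H\cong\Z/2$ is not something one ``reads off from the examples'' --- $H$ is an arbitrary subgroup satisfying the hypotheses --- but it is exactly what \fullref{lem3.1} provides, as recorded in the proof of \fullref{thm2}; with that citation your argument for these types is complete.

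The torus case, however, contains a genuine gap. A finite normal subgroup $\nu$ of $H\subseteq NT_G$ need not be normal in $G$, so the group $G/\nu$ on which your identification $\Gamma=N_{G/\nu}(T_G/\nu)$ rests does not exist in general. Concretely, take $G=SU(2)$, $H=NT_G=T_G\cup jT_G$ with $j=\bigl(\begin{smallmatrix}0&-1\\1&0\end{smallmatrix}\bigr)$, and let $\nu\cong\Z/3$ be the group of cube roots of unity in $T_G$. This $\nu$ is invariant under the Weyl inversion, hence normal in $H$, but it is not normal in $SU(2)$, is not central in $G$, and is not a $2$--group. (The conclusion of the theorem still holds here, since $NT_G/\nu\cong NT_G$ via the cube map on $T_G$, but your argument does not see this.) Consequently the appeal to $\mathbb{P}(BNT)=\Pi$ is unjustified: after your sweeping lemma --- which is correct and a nice self-contained way to get $\nu\subseteq T_G$ --- all you have is an abstract extension of $\pi_0H$ by the torus $T_G/\nu$, whereas the quoted computation applies only to honest normalizers of maximal tori of connected compact simple Lie groups. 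You also never establish that $\pi_0H$ is all of $W(G)$, that is $H=NT_G$; this is forced by the hypothesis that $BH\lra BG$ is a $p$--equivalence, again via \fullref{lem3.1}, and is needed by any version of this argument.

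For comparison, the paper's route in the torus case is different: having shown $H=NT_G$, it quotes kernel theorems from \cite{Is,Ir} to the effect that, for $G$ simple with $G\ne G_2$, a finite subgroup on which the composite $(BG)\p\simeq(BNT_G)\p\lra(B\Gamma)\p$ is null must lie in $Z(G)$, whence $\nu$ is a central $2$--group and $(B\Gamma)\p\simeq(BNT_G)\p\simeq(BG)\p$ at odd $p$. Note that your $\Z/3$ example puts real pressure on that step as well --- it shows that no elementary normality argument can place $\nu$ in $Z(G)$, so everything hinges on the precise scope of those citations. If you want a self-contained repair of your approach, two options suggest themselves: either classify the possible $\Gamma=NT_G/\nu$ abstractly and show each is again a maximal-torus normalizer of the same type (for $A_1$ this follows from $H^2(\Z/2;U(1))\cong\Z/2$, whose two classes are realized by $N_{\SO(3)}T=\OO(2)$ and $N_{SU(2)}T$, but for $B_2$ this requires an argument you have not given); or prove $p$--compactness directly, using Bousfield--Kan at $p=2$ and, at odd $p$, the fact that $H^*(B\Gamma;\F_p)\cong H^*(B(T_G/\nu);\F_p)^{W(G)}$ is a polynomial ring by Shephard--Todd (the order of $W(G)$ is prime to $p$ and $W(G)$ still acts by reflections mod $p$), so that the Eilenberg--Moore spectral sequence shows $\Omega(B\Gamma)\p$ is $\F_p$--finite.
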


The author would like to thank the referee for the numerous suggestions.

\section{A survey of the $p$--compactness of $BG$ }
\label{sec1}

We summarize work of earlier articles \cite{Ig2,It} together with some
basic results, in order
to introduce the problem of $p$--compactness.
For a compact Lie group $G$, the classifying space $BG$ is $p$--compact
if and only if $\Omega (BG)\p$ is $\F_p$--finite.  So it is a mod $p$ finite H--space.  The space $B\Sigma_3$ is not $p$--compact
for $p=3$.  We notice that $\Omega (B\Sigma_3)\3$ is not a mod $3$ finite H--space, since the degree of the first non--zero homotopy group of 
$\Omega (B\Sigma_3)\3$
is not odd.  Actually there is a fibration $\Omega (B\Sigma_3)\3 \lra (S^3)\3 
\lra (S^3)\3$ (see Bousfield and Kan \cite{BK}).

First we consider whether $BG$ is $p$--compact toral, as a special case.  
When $G$ is finite, this is the same as asking if
$BG$ is $p$--compact.  Note that, for a finite group $\pi$, the classifying space $B\pi$ is
an Eilenberg--MacLane space $K(\pi , 1)$.  Since $(BT)\p$ is also Eilenberg--MacLane,
for $BG$ being $p$--compact toral,
the $n$--th homotopy groups of $(BG)\p$ 
are zero for $n \ge 3$.  A converse to this fact is the following.

\begin{thm}{\rm\cite[Theorem~1]{It}}\qua
Suppose $G$ is a compact Lie group, and $X$ is a $p$--compact group.  Then
we have the following:
\begin{enumerate}
\item[\rm(i)] If there is a positive integer $k$ such that
$\pi_n((BG)\p )=0$ for any $n \ge k$,
then $BG$ is $p$--compact toral.

\item[\rm(ii)] If there is a positive integer $k$ such that
$\pi_n(BX)=0$ for any $n \ge k$,
then $X$ is a $p$--compact toral group.
\end{enumerate}
\end{thm}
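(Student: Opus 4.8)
The plan is to reduce both parts to one fact about Eilenberg--MacLane spaces, which I will call the \emph{finiteness lemma}: a simply connected space $Y$ with $H^*(Y;\F_p)$ of finite total dimension and with $\pi_n(Y)=0$ for all $n\ge k$ is $\F_p$--acyclic, so that $Y\p\simeq *$. This is standard: if $\tilde H_*(Y;\F_p)\ne 0$ then the Postnikov tower of $Y$ is finite, and an induction on its length, using Cartan's computation that $H^*(K(A,m);\F_p)$ is infinite dimensional whenever $m\ge 2$ and $A\ne 0$ is a finitely generated $\Z\p$--module, forces $H^*(Y;\F_p)$ to be infinite dimensional. I record one consequence for later use: since the Euler characteristic of a finite complex is independent of the coefficient field, an $\F_p$--acyclic finite complex has $\chi=1$.

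For (ii), recall from \cite{DWb} that the component group $\pi_0(X)$ of a $p$--compact group is a finite $p$--group, so there is a fibration $BX_0\lra BX\lra B\pi_0(X)$ with $X_0$ the connected identity component. Since $\pi_n(B\pi_0(X))=0$ for $n\ge 2$, the long exact sequence gives $\pi_n(BX_0)\cong\pi_n(BX)$ for $n\ge 3$, so $\pi_n(BX_0)=0$ for $n\ge\max(k,3)$; as $(BT)\p\lra BX\lra B\pi_0(X)$ would then exhibit $X$ as $p$--compact toral, it suffices to prove that a connected $X$ with vanishing high homotopy is a $p$--compact torus. So let $X$ be connected, choose a maximal torus $T\lra X$, and consider the fibration $X/T\lra BT\lra BX$ of \cite{DWb}, in which $X/T$ is simply connected and $\F_p$--finite. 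Because $\pi_n(BT)=0$ for $n\ge 3$, the long exact sequence yields $\pi_{n-1}(X/T)\cong\pi_n(BX)$ for $n\ge 4$, so $\pi_m(X/T)=0$ for all $m\ge k-1$. The finiteness lemma now gives $X/T\simeq *$, whence $BT\lra BX$ is an equivalence and $X\simeq T$ is a $p$--compact torus.

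For (i), the same mechanism applies once $\pi_0(G)$ is shown to be a $p$--group. Granting this, the conjugation action of the $p$--group $\pi_0(G)$ on each (finite dimensional) $H_n(BG_0;\F_p)$ is unipotent, hence nilpotent, so the fibre lemma of \cite{BK} turns $BG_0\lra BG\lra B\pi_0(G)$ into a fibration after $\F_p$--completion; as above this gives $\pi_n((BG_0)\p)\cong\pi_n((BG)\p)=0$ for large $n$. Now $G_0$ is connected, so I may run the maximal torus fibration $G_0/T\lra BT\lra BG_0$ for the compact Lie group $G_0$: it $\F_p$--completes, $G_0/T$ is a simply connected finite complex, and the long exact sequence forces $\pi_m((G_0/T)\p)=0$ for large $m$. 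The finiteness lemma gives $(G_0/T)\p\simeq *$, so $G_0/T$ is $\F_p$--acyclic and therefore $\chi(G_0/T)=|W(G_0)|=1$, that is $G_0=T$. Hence $G$ is an extension of the finite $p$--group $\pi_0(G)$ by the torus $T$, and $(BT)\p\lra(BG)\p\lra B\pi_0(G)$ shows $BG$ is $p$--compact toral.

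The technical heart is the finiteness lemma, i.e.\ the classical fact that a nontrivial finite Postnikov system has infinite mod $p$ cohomology; after that, both parts are bookkeeping with the two torus fibrations. The genuinely delicate point, and where I expect the real work to lie, is the omitted first step of (i): that the hypothesis forces $\pi_0(G)$ to be a $p$--group. Unlike case (ii), where this is automatic, for a general compact Lie group one must extract it from the vanishing of high homotopy. The mechanism is exactly the failure illustrated by $B\Sigma_3$ at $p=3$: if $\pi_0(G)$ had a prime--to--$p$ part then the monodromy of $\pi_0(G)$ on $H_*(BG_0;\F_p)$ would no longer be nilpotent, the fibre lemma would fail, and $(BG)\p$ would retain nonzero homotopy in arbitrarily high degrees. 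Making this precise---ruling out all non--$p$--groups $\pi_0(G)$---is the crux of the argument.
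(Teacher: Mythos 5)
A preliminary remark: the paper never proves this statement --- it is quoted verbatim from \cite[Theorem~1]{It}, with the comment that it also follows from work of Grodal \cite{Gt,Gs} --- so your proposal can only be measured against the framework the paper does set up (Section~2, Henn's notion of $p$--nilpotence, and \cite[Theorem~2]{It}). Against that framework, your part (ii) is a plausible outline: $\pi_0(X)$ of a $p$--compact group is a finite $p$--group, the reduction to connected $X$ is correct, and the maximal torus fibration $X/T \lra BT \lra BX$ plus your finiteness lemma does give $X/T\simeq *$, hence $X\simeq T$. Two caveats, though. You assert $X/T$ is simply connected; this amounts to surjectivity of $\pi_1(T)\lra\pi_1(X)$, which is itself a theorem about $p$--compact groups, not a formality. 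More seriously, your one--line proof of the finiteness lemma (induction on Postnikov length via Cartan) does not close as stated: finite dimensionality of mod $p$ cohomology is not inherited by the fibres or bases appearing in a Postnikov tower --- the path fibration $K(\Z\p ,2)\lra{*}\lra K(\Z\p ,3)$ has $\F_p$--trivial total space but infinite dimensional fibre and base --- so the inductive hypothesis cannot be verified. The lemma is true, but it needs input at the level of Miller's theorem (McGibbon--Neisendorfer, Lannes--Schwartz) or precisely the results of Grodal cited in the paper; it is the real content of the theorem, not a routine induction.

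The genuine gap is part (i), and it is worse than the omission you flag: the step you defer as ``the crux'' --- that the hypothesis forces $\pi_0(G)$ to be a $p$--group --- is false, so it cannot be filled by anyone. Take $G=\Sigma_3$ and $p=2$ (or $G=S^1\times\Sigma_3$ if you want a nontrivial torus): since $\Sigma_3$ is $2$--nilpotent, $(B\Sigma_3)\2\simeq B\Z /2$, so $\pi_n((BG)\2)=0$ for $n\ge 3$ and the hypothesis of (i) holds; the conclusion of (i) holds as well, via the fibration $(BS^1)\2\lra (BG)\2\lra B\Z /2$; but $\pi_0(G)=\Sigma_3$ is not a $2$--group. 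What the hypothesis actually forces is that $G$ is $p$--nilpotent in Henn's sense \cite{He} --- $G_0$ a torus, $\pi_0G$ $p$--nilpotent, the normal $p$--complement acting trivially on $G_0$ --- which is exactly the characterization in \cite[Theorem~2]{It} and Lemma~2.1 of this paper; the finite $p$--group in the toral fibration is then a Sylow $p$--subgroup of $\pi_0G$, not $\pi_0G$ itself. Your proposed mechanism for the deferred step is also incorrect: a prime--to--$p$ part of $\pi_0(G)$ does not force non--nilpotent monodromy on $H_*(BG_0;\F_p)$, since it may act trivially on $G_0$ (again $S^1\times\Sigma_3$). Consequently part (i) cannot be organized as ``first show $\pi_0(G)$ is a $p$--group, then apply the fibre lemma to $BG_0\lra BG\lra B\pi_0G$''; one must either establish $p$--nilpotence of $G$ and route the argument through Henn's subgroup $G_p$ (the preimage of a Sylow $p$--subgroup, with $(BG)\p\simeq (BG_p)\p$), or work directly with the $p$--completed space $(BG)\p$ and its universal cover, which is where the Grodal--type theorems on finite Postnikov systems enter.
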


This theorem is also a consequence of work of Grodal \cite{Gt,Gs}

A finite group $\gamma$ is $p$--nilpotent if and only if $\gamma$ is
expressed as the semidirect product $\nu \rtimes \gamma_p$,
where $\nu$ is the subgroup generated by all elements of order prime to $p$,
and where $\gamma_p$ is the $p$--Sylow subgroup.  The group $\Sigma_3$ is $p$--nilpotent if and only if $p\ne 3$.
Recall that a fibration of connected spaces
$F \lra E \lra B$
is said to be preserved by the $p$--completion if $F\p \lra E\p \lra B\p$
is again a fibration.  When $\pi_0(G)$ is a $p$--group, a result of
Bousfield and Kan \cite{BK} implies that
the fibration $BG_0 \lra BG \lra B\pi_0G$ is preserved by the $p$--completion, and $BG$ is $p$--compact.

We have the following necessary and sufficient conditions that $BG$ be $p$--compact toral.
\begin{thm}{\rm\cite[Theorem~2]{It}}\qua
Suppose $G$ is a compact Lie group, and $G_0$ is the connected component with
the identity.  Then $BG$ is $p$--compact toral if and only if the following conditions hold:
\begin{enumerate}
\item[\rm(a)] $G_0$ is a torus $T$ and $G/G_0=\pi_0G$ is $p$--nilpotent.

\item[\rm(b)] The fibration $BT \lra BG \lra B\pi_0G$ is preserved by
the $p$--completion.
\end{enumerate}
Moreover, the $p$--completed fibration
$(BT)\p \lra (BG)\p \lra (B\pi_0G)\p$ splits if and only if
$T$ is a central subgroup of $G$.
\end{thm}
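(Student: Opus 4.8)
The plan is first to collapse the toral condition to a statement about homotopy groups, and then to translate that statement into (a) and (b). Write $(\star)$ for the condition that $\pi_n((BG)\p)=0$ for all $n\ge 3$. I would begin by recording that $BG$ is $p$--compact toral if and only if $(\star)$ holds: the ``only if'' follows by feeding the defining fibration $(BT)\p\lra(BG)\p\lra B\pi$, whose fibre is $K(\mathbb{Z}_p^{r},2)$ and whose base is $K(\pi,1)$, into its long exact homotopy sequence, while the ``if'' is precisely \cite[Theorem~1]{It}. The implication (a),(b)$\Rightarrow(\star)$ is then a direct computation: writing $\pi_0G=\nu\rtimes\gamma_p$, the space $B\nu$ is $\F_p$--acyclic, so $(B\pi_0G)\p\simeq B\gamma_p$, and condition (b) supplies the $p$--completed fibration $(BT)\p\lra(BG)\p\lra B\gamma_p$ with fibre $K(\mathbb{Z}_p^{r},2)$ and base $K(\gamma_p,1)$; its long exact sequence yields $(\star)$ and simultaneously exhibits $\Omega(BG)\p$ as an extension of the $p$--compact torus $(T)\p$ by $\gamma_p$.

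For the reverse implication $(\star)\Rightarrow$(a),(b) I would argue through the universal cover. By \cite[Theorem~1]{It}, $(\star)$ already makes $P=\Omega(BG)\p$ a $p$--compact toral group, say an extension of a $p$--compact torus $S$ by a finite $p$--group $\pi$; its identity component is $S$, and since $B\pi=K(\pi,1)$ we have $\wt{(BG)\p}\simeq BS$. On the Lie side, because $B\pi_0G=K(\pi_0G,1)$ the fibre $BG_0$ of $BG\lra B\pi_0G$ is itself the universal cover of $BG$. Thus completion provides a map $(BG_0)\p=(\wt{BG})\p\lra\wt{(BG)\p}=BS$, and the whole reverse implication reduces to showing that this map is an equivalence. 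Indeed, an equivalence here is the same as saying that the fibration $BG_0\lra BG\lra B\pi_0G$ is preserved by $p$--completion, which is condition (b); and once (b) holds, $(G_0)\p\simeq S$ is a $p$--compact torus, forcing $G_0$ to be a torus $T$, while the resulting base $(B\pi_0G)\p\simeq B\pi$ is the classifying space of a finite $p$--group, forcing $\pi_0G$ to be $p$--nilpotent.

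The main obstacle is therefore exactly the equivalence $(BG_0)\p\simeq BS$, equivalently the nilpotency of the $\pi_0G$--action on $H_*(BG_0;\F_p)$ demanded by the mod $p$ fibre lemma of Bousfield--Kan. I would extract this from the structure theory of $p$--compact groups: the maximal torus of the toral group $P$ is its identity component $S$, and its Weyl group $N_P(S)/S$ is the \emph{$p$--group} $\pi$, whereas the maximal torus of $G$ is $T_0$ with Weyl group $W_G=N_G(T_0)/T_0$. Comparing these Weyl data across the completion $BG\to(BG)\p$ should force the prime--to--$p$ part of $W_G$ to act trivially; this kills the reflection group $W(G_0)\subseteq W_G$ (so $G_0$ is a torus) and trivialises the $\nu$--action on $H_*(BT;\F_p)$, leaving only the automatically unipotent $\gamma_p$--action and hence the required nilpotency. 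I expect the genuinely delicate point to be the case $p=2$, where the reflections of $W(G_0)$ are themselves $2$--elements and rational cohomology cannot separate, for instance, $SU(2)$ from $\OO(2)$; here one must argue with mod $2$ cohomology, or with the loop--space Eilenberg--Moore identification of $H^*(S;\F_2)$ with an exterior algebra on degree--one classes, rather than with rational invariant theory.

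Finally, for the moreover clause I would identify the monodromy of the $p$--completed fibration $(BT)\p\lra(BG)\p\lra(B\pi_0G)\p$ with the conjugation action of $\pi_0G$ on $\pi_2((BT)\p)=\pi_1(T)\otimes\mathbb{Z}_p$. The content of the clause is precisely this identification: a splitting trivialises the monodromy, so $\pi_0G$ centralises $T$ and $T$ is central, and conversely when $T$ is central the monodromy is trivial and the section is produced from the $p$--completed central extension $T\lra G\lra\pi_0G$, the torus being divisible. Identifying the monodromy with the conjugation action is the only real point here.
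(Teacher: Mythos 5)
First, a point of order: the paper never proves this statement. It appears in the survey Section~\ref{sec1} and is quoted verbatim from \cite[Theorem~2]{It}, so there is no proof in the paper to compare yours against; what follows assesses your argument on its own terms. Your forward direction ((a) and (b) imply torality) is fine, and reducing the converse to the assertion that the natural map $(BG_0)\p \lra \wt{(BG)\p}\simeq BS$ is an equivalence is a sensible organization. But that assertion \emph{is} the theorem, and you do not prove it. The paragraph beginning ``I would extract this from the structure theory of $p$--compact groups'' contains no mechanism: you never construct any comparison between the Lie--theoretic data $(T_0,W_G)$ of $G$ and the $p$--compact data $(S,\pi)$ of $\Omega(BG)\p$ --- the completion map $BG\lra (BG)\p$ is far from nilpotent, and building such a comparison across it is precisely the difficulty --- so ``should force the prime--to--$p$ part of $W_G$ to act trivially'' is exactly where a proof would have to go. You concede yourself that at $p=2$ the mechanism collapses (for $G_0=SU(2)$ one has $W(G_0)=\Z/2$, so there is no prime--to--$2$ part to exploit), offering only the hope that mod $2$ cohomology will save it. So the hard implication is asserted, not established.

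Your treatment of the ``moreover'' clause is not merely incomplete but incorrect. You claim that identifying the monodromy with the conjugation action is ``the only real point'', and that centrality of $T$ yields a section because ``the torus is divisible''. However, a fibration with fibre $(BT)\p=K((\Z\p)^r,2)$ over $B\pi$ and trivial monodromy is still classified by a $k$--invariant in $H^3(\pi;(\Z\p)^r)$; divisibility of $T$ kills only $\mathrm{Ext}(H_1(\pi;\Z),T)$ in $H^2(\pi;T)$, leaving $\mathrm{Hom}(H_2(\pi;\Z),T)$, the dual of the Schur multiplier, which is generally nonzero. For a central extension $T\lra G\lra \pi_0G$ the $k$--invariant of the completed fibration is the image of the group--extension class, and the paper's own \fullref{prop2.1} supplies groups where this does not vanish: for $G\subset U(2)$ generated by $\rho (Q_8)$ and the central circle $S$, the extension $S\lra G\lra \Z/2\oplus\Z/2$ is central but non--split, its class generates $H^2((\Z/2)^2;S)\cong\Z/2$, and it survives $2$--completion --- its mod $2$ image is the Bockstein $Sq^1(xy)=x^2y+xy^2$, which appears as a nonzero transgression $d_3(c)$ in the Serre spectral sequence of $(BS)\2\lra (BG)\2\lra B(\Z/2)^2$. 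In particular the completed projection admits no section there: a section would force $H^*((\Z/2)^2;\F_2)\lra H^*((BG)\2;\F_2)$ to be injective, contradicting that nonzero differential. So ``trivial monodromy plus divisibility'' cannot be the argument; dealing with this $k$--invariant (equivalently, pinning down the precise sense of ``splits'' intended in \cite{It}) is the real content of the clause, and it is exactly the point your proposal dismisses.
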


Next we consider the general case.  What are the conditions that $BG$ be $p$--compact?  For example,
for the normalizer $NT$ of a maximal torus $T$ of a connected compact Lie group $K$, it is well--known
that $(BNT)\p \simeq (BK)\p$ if $p$ does not divide the order of the Weyl group $W(K)$.  This means that 
$BNT$ is $p$--compact for such $p$.  Using the following result, we can show the converse.

\begin{prop}{\rm\cite[Proposition~3.1]{It}}\qua
If $BG$ is $p$--compact, then the following hold:
\begin{enumerate}
\item[\rm(a)] $\pi_0G$ is $p$--nilpotent.

\item[\rm(b)] $\pi_1((BG)\p )$ is isomorphic to a $p$--Sylow subgroup
of $\pi_0G$.
\end{enumerate}
\end{prop}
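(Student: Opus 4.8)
The plan is to exploit the universal--cover fibration $BG_0\to BG\to B\pi$ attached to $1\to G_0\to G\to\pi\to 1$, where I write $\pi=\pi_0G$; since $G_0$ is connected, $BG_0$ is simply connected, so this is the universal cover of $BG$ and $\pi_1(BG)\cong\pi$. Assuming $BG$ is $p$--compact, set $X=\Omega(BG)\p$, an $\F_p$--finite loop space. The first input is purely formal: by the structure theory of $p$--compact groups (see \cite{DWb}) the component group $\pi_0X=\pi_1((BG)\p)$ is a finite $p$--group, which I call $Q$. This already supplies the ``$p$--group'' half of (b); the whole problem is to identify $Q$ with a $p$--Sylow of $\pi$.

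To do that I would compare $G$ with the preimage $G_P$ of a fixed $p$--Sylow $P\le\pi$, so that $G_P$ has identity component $G_0$ and $\pi_0G_P=P$. Because $P$ is a $p$--group, the Bousfield--Kan result quoted above applies verbatim: $BG_P$ is $p$--compact and the fibration $BG_0\to BG_P\to BP$ is preserved by $p$--completion, whence $\pi_1((BG_P)\p)\cong P$ and the completion map $\pi_1(BG_P)\to\pi_1((BG_P)\p)$ is the identity on $P$. The inclusion $G_P\hookrightarrow G$ now gives a finite covering $BG_P\to BG$ of degree $[\pi:P]$, which is prime to $p$, so the transfer makes $H_*(BG_P;\F_p)\to H_*(BG;\F_p)$ split surjective; as $\F_p$--completion preserves $\F_p$--homology the same holds after completion, and in degree one the induced map of Frattini quotients $P/\Phi(P)\twoheadrightarrow Q/\Phi(Q)$ is onto. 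By the Burnside basis theorem the homomorphism $\alpha\colon P=\pi_1((BG_P)\p)\to\pi_1((BG)\p)=Q$ is therefore surjective.

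Granting for the moment that $\alpha$ is also injective---equivalently $|Q|=|P|$---part (b) is immediate, and (a) follows quickly: naturality of $p$--completion shows that the map $\beta\colon\pi=\pi_1(BG)\to Q$ induced on $\pi_1$ restricts along $P\hookrightarrow\pi$ to $\alpha$, so $\alpha^{-1}\beta\colon\pi\to P$ retracts the Sylow inclusion. Hence $\pi=N\rtimes P$ with $N=\ker(\alpha^{-1}\beta)$ a normal subgroup of order $[\pi:P]$ prime to $p$; this is precisely a normal $p$--complement, so $\pi_0G=\pi$ is $p$--nilpotent.

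The single hard point is the injectivity of $\alpha$. The difficulty is structural: unlike the Sylow fibration, the original fibration $BG_0\to BG\to B\pi$ is not preserved by $p$--completion in general (the $\pi$--action on $H_*(BG_0;\F_p)$ need not be nilpotent, as already for $G=\OO(2n)$), so $\pi_1((BG)\p)$ cannot be read off directly and a priori the $p$--part could collapse, exactly as it does for $\Sigma_3$ at $p=3$, where $\pi_1((B\Sigma_3)\3)=1$ while the $3$--Sylow is $\Z/3$. The point of the hypothesis is that such a collapse is incompatible with $\F_p$--finiteness: a nontrivial $\ker\alpha$ would produce a $\Z/p\subset\pi$ dying under completion, and the resulting non--preserved fibration would make $\Omega(BG)\p$ fail to be $\F_p$--finite, in the manner of the fibration $\Omega(B\Sigma_3)\3\to(S^3)\3\to(S^3)\3$. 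I would therefore resolve injectivity through the $p$--compact group structure of $X$---comparing the identity component $X_0$ and its maximal torus with those of $(G_0)\p$ and running an Euler--characteristic (order) comparison across the prime--to--$p$ covering $BG_P\to BG$---to force $|\pi_0X|=|P|$.
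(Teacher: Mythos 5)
Note first that the paper contains no proof of this statement: it is quoted verbatim from \cite[Proposition~3.1]{It}, so there is no ``paper proof'' to compare against, and your proposal has to stand on its own. Much of it does: the fact that $Q=\pi_1((BG)\p)=\pi_0(\Omega(BG)\p)$ is a finite $p$--group is indeed standard $p$--compact group theory \cite{DWb}; the transfer for the covering $BG_P\to BG$ of degree prime to $p$, combined with the Burnside basis theorem, correctly gives surjectivity of $\alpha\colon P\to Q$; and, \emph{granted} that $\alpha$ is an isomorphism, your retraction argument producing a normal $p$--complement in $\pi_0G$ (hence part (a)) is sound.

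The difficulty is that everything you actually prove holds for an \emph{arbitrary} compact Lie group: the surjectivity of $\alpha$ and the deduction of (a) from (b) never use the hypothesis that $\Omega(BG)\p$ is $\F_p$--finite, and the one place the hypothesis enters (``$Q$ is a finite $p$--group'') only bounds $Q$ from above. The entire content of the proposition is thus concentrated in the step you yourself flag as ``the single hard point,'' the injectivity of $\alpha$, and there you offer an analogy with $\Sigma_3$ plus a plan rather than an argument. Moreover, the plan as stated cannot work. A connected $p$--compact group of positive rank has vanishing mod $p$ Euler characteristic (its $\F_p$--cohomology is a finite connected Hopf algebra with odd generators), so whenever $G_0$ is nontrivial one gets $\chi(X)=|Q|\cdot\chi(X_0)=0$ and $\chi(\Omega(BG_P)\p)=|P|\cdot\chi((G_0)\p)=0$: the ``order comparison'' reads $0=0$ and says nothing about $|Q|$ versus $|P|$. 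To extract orders one would need the finer Dwyer--Wilkerson structure theory (maximal tori, Weyl groups, $\chi(X/T)=|W_X|$), and that in turn requires identifying the identity component $X_0$ of $\Omega(BG)\p$ with $(G_0)\p$ or with the completion of a suitable cover of $BG$ --- which is essentially equivalent to showing that the relevant fibration is preserved by $p$--completion, i.e.\ it presupposes the very collapse statement you are trying to rule out. So the proposal has a genuine gap exactly where the $\F_p$--finiteness hypothesis must do its work (already in the case of finite $G$, where the statement amounts to the nontrivial theorem that $p$--compactness of $B\gamma$ forces $\gamma$ to be $p$--nilpotent), and the suggested route for closing it is vacuous or circular as sketched.
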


The necessary condition of this proposition is not sufficient, even though the
rational
cohomology of $(BG)\p$ is assumed to be expressed as a ring of invariants under the action of a group generated by pseudoreflections.

\begin{thm}{\rm\cite[Theorem~1]{Ig2}}\qua
\label{thm04}
Let $G=\Gamma_2$, the quotient group of a subgroup $SU(3)\rtimes \Z/2$ of 
the exceptional Lie group $G_2$.  For $p=3$, the following hold:
\begin{enumerate}
\item[\rm(1)] $\pi_0G$ is $p$--nilpotent and
$\pi_1((BG)\p )$ is isomorphic to a $p$--Sylow subgroup of $\pi_0G$.

\item[\rm(2)] $(BG)\p$ is rationally equivalent to $(BG_2)\p$.

\item[\rm(3)] $BG$ is not $p$--compact.
\end{enumerate}
\end{thm}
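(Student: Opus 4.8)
The plan is to settle (1) and (2) by direct cohomological calculation and to reduce (3) to an Eilenberg--Moore computation that runs exactly parallel to the $B\Sigma_3$ case recalled at the start of \fullref{sec1}.

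For (1), the group $\pi_0\Gamma_2=\Z/2$ has order prime to $3$, so its $3$--Sylow subgroup is trivial and $\pi_0\Gamma_2$ is (trivially) $3$--nilpotent; moreover $\pi_1(B\Gamma_2)=\pi_0\Gamma_2=\Z/2$ is killed by $3$--completion, so $\pi_1((B\Gamma_2)\3)=0$, which is the trivial $3$--Sylow subgroup. For (2), since $[\Gamma_2:PU(3)]=2$ is prime to $3$ the transfer gives $H^*(B\Gamma_2;\Q)\cong H^*(BPU(3);\Q)^{\Z/2}$. Here $H^*(BPU(3);\Q)=\Q[c_2,c_3]$ and the generator of $\Z/2=\Gamma_2/PU(3)$ acts by complex conjugation, $c_2\mapsto c_2,\ c_3\mapsto -c_3$, so the invariants are $\Q[c_2,c_3^2]$, polynomial on classes of degrees $4$ and $12$. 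As $H^*(BG_2;\Q)=\Q[x_4,x_{12}]$ and both spaces have free (even) rational cohomology, they are rationally formal, giving the rational equivalence $(B\Gamma_2)\3\simeq_\Q (BG_2)\3$ of (2).

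For (3) I would again apply the transfer, now with $\F_3$ coefficients, to identify $H^*(B\Gamma_2;\F_3)\cong H^*(BPU(3);\F_3)^{\Z/2}$, and then run the Eilenberg--Moore spectral sequence
\[
\mathrm{Tor}_{H^*(B\Gamma_2;\F_3)}(\F_3,\F_3)\ \Longrightarrow\ H^*\big(\Omega(B\Gamma_2)\3;\F_3\big).
\]
The decisive point is that $PU(3)$ carries $3$--torsion: because $\pi_1(PU(3))=\Z/3$ there is a \emph{non--nilpotent} class $v_2\in H^2(BPU(3);\F_3)$ (it restricts to a non--zero linear form on $H^*(BT';\F_3)$) together with $v_3=\beta v_2\in H^3$, and conjugation acts on both by $-1$. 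Hence $v_2,v_3$ are anti--invariant while the odd--degree product $v_2v_3\in H^5$, with $(v_2v_3)^2=v_2^2v_3^2=0$, is invariant and \emph{indecomposable} in the invariant ring (there are no invariant classes in degrees $1,2,3$ to factor it). This is precisely the mechanism of $\Sigma_3=\Z/3\rtimes\Z/2$, where $H^*(B\Sigma_3;\F_3)=\F_3[w_4]\otimes\Lambda(z_3)$ and the exterior generator $z_3$ makes $\mathrm{Tor}$, hence $H^*(\Omega(B\Sigma_3)\3;\F_3)$, infinite (cf.\ \cite{BK}). The exterior generator $v_2v_3$ then contributes a divided power algebra $\Gamma[\sigma(v_2v_3)]$ to the $E_2$--term, whose powers furnish infinitely many non--zero classes, so $H^*(\Omega(B\Gamma_2)\3;\F_3)$ is infinite--dimensional. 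Thus $\Omega(B\Gamma_2)\3$ is not $\F_3$--finite and $B\Gamma_2$ is not $3$--compact.

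The hard part will be the last step. Unlike the finite group $\Z/3$, the algebra $H^*(BPU(3);\F_3)$ itself has \emph{finite} $\mathrm{Tor}$, namely the finite ring $H^*(PU(3);\F_3)$, so one must show that passage to $\Z/2$--invariants turns $\mathrm{Tor}$ from finite to infinite. This requires the explicit ring structure of $H^*(BPU(3);\F_3)$ together with the conjugation action, and a check that in the invariant subalgebra $v_2v_3$ is a genuine \emph{free} exterior generator, not absorbed by any relation. I expect this to come from the known computation of $H^*(BPU(3);\F_3)$ and detection on the non--toral subgroup $(\Z/3)^2\subset PU(3)$ via Quillen's theorem, on which conjugation acts by $-1$ and $v_2v_3$ restricts non--trivially; this both produces the offending generator and, by comparison with $H^*(BG_2;\F_3)=\F_3[x_4,x_{12}]$ (recall $G_2$ is $3$--torsion free), confirms that $B\Gamma_2$ cannot be the classifying space of the $3$--compact group it would have to be were it $3$--compact.
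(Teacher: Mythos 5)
Your parts (1) and (2) are essentially fine: $\pi_0\Gamma_2=\Z/2$ is trivially $3$--nilpotent, the transfer gives $H^*(B\Gamma_2;\F_3)\cong H^*(BPU(3);\F_3)^{\Z/2}$, and the invariant computation $\Q[c_2,c_3^2]$ is correct (though in (1) you should justify why completion kills $\pi_1$, since $B\Gamma_2$ is not nilpotent and the fibration $BPU(3)\lra B\Gamma_2\lra B\Z/2$ is \emph{not} preserved by $3$--completion; if it were, one would get $(B\Gamma_2)\3\simeq (BPU(3))\3$, contradicting (3)). The fatal problem is in (3): the class $v_2v_3$ on which the whole Eilenberg--Moore argument rests is \emph{zero}; in fact $H^5(BPU(3);\F_3)=0$. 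Run the Serre spectral sequence of $BSU(3)\lra BPU(3)\lra K(\Z/3,2)$: in total degree $5$ the only contribution is the class $u\beta u$ on the base, and the only differential that can touch it is the transgression $d_5(c_2)$. That transgression is nonzero, because no class of $H^4(BPU(3);\Z)$ can restrict to $c_2\in H^4(BSU(3);\Z)$: any such restriction, evaluated on the maximal torus, factors through $H^4(BT_{PU(3)};\Z)=\mathrm{Sym}^2Q$ with $Q\subset P=H^2(BT_{SU(3)};\Z)$ the root lattice inside the weight lattice, and $c_2|_{BT}=-(x_1^2+x_1x_2+x_2^2)$ does not lie in $\mathrm{Sym}^2Q$ (only $3c_2$ does). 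Hence $d_5(c_2)$ kills $u\beta u$, so $v_2v_3=0$. Note this argument \emph{must} be run integrally: mod $3$ the lattice obstruction disappears, since $(x_1-x_2)^2\equiv x_1^2+x_1x_2+x_2^2$. Your proposed detection could never have worked either: on the non--toral $(\Z/3)^2$ the class $v_2$ restricts to the extraspecial extension class $a_1b_1$, and $(a_1b_1)\cdot\beta(a_1b_1)=0$ identically because $a_1^2=b_1^2=0$. So there is no invariant exterior generator in degree $5$, no divided power algebra in $E_2$, and the analogy with $B\Sigma_3$ breaks down at exactly the point where you need it. (The convergence issue you flagged -- that an infinite $E_2$--term does not by itself give an infinite abutment -- is a genuine second gap, but it is moot.)

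For comparison: the paper does not prove this theorem at all -- it is quoted from \cite{Ig2} -- but the discussion the paper attaches to it shows that the actual mechanism is of a completely different nature. One identifies $K(B\Gamma_2;\Z\3)$ with the invariants $K(BT^2;\Z\3)^{W(G_2)^*}$ of the representation \emph{dual} to the Weyl group representation of $G_2$; the subgroups $W(G_2)$ and $W(G_2)^*$ of $GL(2,\Z)$ are conjugate (which is why your part (2) and the $K$--theory isomorphism with $BG_2$ hold), but their mod $3$ reductions are inequivalent representations, and it is this integral--versus--mod--$3$ lattice discrepancy, combined with the structure theory of connected $p$--compact groups from \cite{DWb} (by part (1), $\Omega(B\Gamma_2)\3$ would be connected), that obstructs $3$--compactness. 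It is the same kind of lattice subtlety that kills your degree--$5$ class. A repair of your approach would require the actual ring structure of $H^*(BPU(3);\F_3)$ (Kono--Mimura--Shimada), in which the conjugation--invariant odd--degree classes first appear in higher degrees, together with control of the Eilenberg--Moore differentials; that would be a substantially different, and harder, proof than the one on record.
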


We discuss invariant rings and some properties of $B\Gamma_2$ and $BG_2$ at $p=3$.  Suppose $G$ is 
a compact connected Lie group.  The Weyl group $W(G)$ acts on its maximal torus $T^n$, and the integral 
representation $W(G) \lra GL(n, \Z )$ is obtained (see Dwyer and
Wilkerson~\cite{DWgs,DWc}).  It is well--known
that $K(BG)\cong K(BT^n)^{W(G)}$ and $H^*(BG; \F_p)\cong H^*(BT^n; \F_p)^{W(G)}$ for large $p$.  Let
$W(G)^*$ denote the dual representation of $W(G)$.  Although the mod 3 reductions of the integral representations
of $W(G_2)$ and $W(G_2)^*$ are not equivalent, there is $\psi \in GL(2, \Z )$ such that 
$\psi W(G_2) \psi^{-1} = W(G_2)^*$ \cite[Lemma~3]{Ig2}.  Consequently,
$K(BT^2 ; \Z \3)^{W(G_2)} \cong K(BT^2 ; \Z \3)^{W(G_2)^*} $.  Since 
$K(B\Gamma_2 ; \Z \3) \cong K(BT^2 ; \Z \3)^{W(G_2)^*} $, we have the following result.

\begin{thm}{\rm\cite[Theorem~3]{Ig2}}\qua
Let $\Gamma_2$ be the compact Lie group as in \fullref{thm04}. 
Then the following hold:
\begin{enumerate}
\item[\rm(1)] The $3$--adic K-theory $K(B\Gamma_2 ; \Z \3)$ is isomorphic
to $K(BG_2 ; \Z \3)$as a $\lambda$--ring.
\item[\rm(2)] Let $\Gamma$ be a compact Lie group such that $\Gamma_0=PU(3)$ and the order of $\pi_0(\Gamma)$
is not divisible by $3$.
Then any map from $(B\Gamma)\3$ to $(BG_2)\3$ is null homotopic.  In particular
$[(B\Gamma_2)\3, (BG_2)\3]=0$.
\end{enumerate}
\end{thm}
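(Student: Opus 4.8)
For (1), the ring isomorphisms are already in place from the paragraph preceding the statement: $K(BG_2;\Z\3)\cong K(BT^2;\Z\3)^{W(G_2)}$ (the standard invariant--ring description, valid here because $G_2$ is simply connected), $K(B\Gamma_2;\Z\3)\cong K(BT^2;\Z\3)^{W(G_2)^*}$, and the isomorphism between the two rings of invariants induced by the element $\psi\in GL(2,\Z)$ of \cite[Lemma~3]{Ig2} with $\psi W(G_2)\psi^{-1}=W(G_2)^*$. It therefore only remains to check that each of these is an isomorphism of $\lambda$--rings. For the first two this holds because the Weyl groups act through automorphisms of the character lattice $\Z^2$ of $T^2$, hence through $\lambda$--ring automorphisms, so the invariants are $\lambda$--subrings and restriction is a $\lambda$--map. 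For the third, I would note that $\psi$ is an automorphism of the lattice $\Z^2$, so the induced ring automorphism $\psi_*$ of $K(BT^2;\Z\3)$ commutes with every Adams operation $\psi^k$ (each multiplies a character by $k$, which a lattice map respects); since $K(BT^2;\Z\3)$ is torsion free, a ring automorphism commuting with all Adams operations is automatically a $\lambda$--ring automorphism (Wilkerson), and $\psi W(G_2)\psi^{-1}=W(G_2)^*$ guarantees that $\psi_*$ carries the one ring of invariants onto the other. Composing the three yields the $\lambda$--ring isomorphism $K(B\Gamma_2;\Z\3)\cong K(BG_2;\Z\3)$.

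For (2) I would first reduce to the connected component. Since $|\pi_0(\Gamma)|$ is prime to $3$, the space $(B\Gamma)\3$ is the homotopy orbit space of a $\pi_0(\Gamma)$--action of order prime to $3$ on $(B\Gamma_0)\3=(BPU(3))\3$, and a standard transfer argument then shows that a map out of $(B\Gamma)\3$ into the $3$--complete space $(BG_2)\3$ is null homotopic as soon as its restriction along $(BPU(3))\3\lra(B\Gamma)\3$ is. It therefore suffices to prove that every map $(BPU(3))\3\lra(BG_2)\3$ is null homotopic; the final clause $[(B\Gamma_2)\3,(BG_2)\3]=0$ is the special case $\pi_0(\Gamma)=\Z/2$.

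For the connected case I would use that maps into the $3$--completed classifying space of the connected group $G_2$ are faithfully recorded by their effect on $3$--adic K--theory together with their restrictions to the elementary abelian $3$--subgroups and their centralizers (Dwyer--Zabrodsky, Lannes, and Notbohm), so that $f\colon(BPU(3))\3\lra(BG_2)\3$ is null exactly when the induced $\lambda$--ring map $f^*\colon K(BG_2;\Z\3)\to K(BPU(3);\Z\3)$ is the augmentation. A nontrivial $f$ would force a nonzero admissible homomorphism of maximal tori, equivalently a nonzero $\Z\3$--linear map intertwining the $W(G_2)^*$--action with the $W(G_2)$--action and compatible with the adjoint ($A_2$ root) lattice of $PU(3)$; restricted to the non--toral $(\Z/3)^2\subset PU(3)$ this would, by Dwyer--Zabrodsky, name a nontrivial conjugacy class of homomorphisms $(\Z/3)^2\to G_2$ fitting the toral data. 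I would rule every such candidate out by the mod $3$ phenomenon already isolated before the statement: the lattices carrying $W(G_2)$ and $W(G_2)^*$ are conjugate over $\Z$ (this is exactly what yields the coincidence in (1)) but have \emph{inequivalent} mod $3$ reductions, so no admissible integral map can intertwine them in the way a genuine homomorphism out of $PU(3)$ would require. Hence $f^*$ is the augmentation and $f$ is null.

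The main obstacle is precisely this last verification, and it is where the two parts of the theorem pull against each other. Part (1) is driven by the $\Z$--conjugacy $\psi W(G_2)\psi^{-1}=W(G_2)^*$, which makes the two K--theories coincide as $\lambda$--rings; part (2) asserts that this coincidence is \emph{not} geometric, and the reason is the failure of that conjugacy to descend compatibly to the $A_2$ lattice modulo $3$. Concretely I expect the work to consist of enumerating the conjugacy classes of rank--$2$ elementary abelian $3$--subgroups of $G_2$, comparing the $W(G_2)$--lattice with the $W(G_2)^*$--type lattice of $PU(3)$ modulo $3$, and checking that no nonzero intertwining survives; the computation recorded as \cite[Lemma~3]{Ig2} is the input that forces the contradiction. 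A rational argument cannot suffice, since rationally a nonzero ring map $H^*(BG_2;\mathbb{Q})\to H^*(BPU(3);\mathbb{Q})$ does exist, confirming that the obstruction lives entirely at the prime $3$.
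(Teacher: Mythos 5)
Your part (1) is exactly the paper's own argument: this survey ``proves'' the theorem only through the paragraph that precedes it, namely the chain $K(B\Gamma_2;\Z\3)\cong K(BT^2;\Z\3)^{W(G_2)^*}\cong K(BT^2;\Z\3)^{W(G_2)}\cong K(BG_2;\Z\3)$, with the middle isomorphism induced by the element $\psi\in GL(2,\Z)$ satisfying $\psi W(G_2)\psi^{-1}=W(G_2)^*$ from \cite[Lemma~3]{Ig2}. Your additional verification that each step respects the $\lambda$--structure (Weyl groups act by lattice automorphisms, hence commute with Adams operations, and on the torsion-free ring $K(BT^2;\Z\3)$ compatibility with Adams operations already forces compatibility with the $\lambda$--operations, by Wilkerson) is a correct and welcome filling-in of what the paper leaves implicit. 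For part (2) the paper offers no proof at all --- the theorem is quoted from \cite{Ig2} --- so there is nothing in this text to compare your sketch against; your reduction to the connected component via homotopy fixed points (using that $|\pi_0\Gamma|$ is prime to $3$, that the relevant obstruction groups are $\Z\3$--modules, and that the null component of the mapping space is $(BG_2)\3$ itself) is standard and sound, as is the passage via Notbohm's theory from maps $(BPU(3))\3\lra (BG_2)\3$ to admissible maps of $3$--adic lattices.

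The genuine gap is at the crux of (2), and you half-acknowledge it yourself. The assertion that ``the lattices are conjugate over $\Z$ but have inequivalent mod $3$ reductions, so no admissible integral map can intertwine them'' is not yet an argument: admissibility requires only that for each $w\in W(PU(3))=\Sigma_3$ there exist \emph{some} $w'\in W(G_2)$ with $\phi w=w'\phi$, i.e.\ intertwining up to a homomorphism $\Sigma_3\lra W(G_2)$ that you do not get to choose in advance --- and this looseness is precisely what the conjugating element $\psi$ exploits to make part (1) true, so inequivalence of the two named reductions does not by itself exclude a nonzero $\phi$. To close the gap you must (i) enumerate the homomorphisms $\Sigma_3\lra W(G_2)\cong\Sigma_3\times\Z/2$ that a rationally nonzero $\phi$ can induce (the projection to the $\Sigma_3$ factor must be an automorphism, and the central $-1$ absorbs at most a sign twist), and (ii) show that in every resulting case a nonzero $\phi$, rescaled to be nonzero mod $3$, would give a nonzero map of $\F_3[\Sigma_3]$--modules between the two reductions, then rule this out using their submodule structure (the two rank--two reductions are uniserial with trivial composition factor in the socle for one and in the top for the other, so the image of a nonzero map could only be a full isomorphism, contradicting inequivalence, or a one-dimensional piece whose two descriptions do not match). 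Your Dwyer--Zabrodsky detour through the non-toral $(\Z/3)^2$ is not needed for this and introduces its own bookkeeping; the lattice computation is where the proof lives, and as submitted it is stated as an expectation rather than carried out.
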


We recall that if a connected compact Lie group $G$ is simple, 
the following results hold:

\begin{enumerate}
\item For any prime $p$, the space $(BG)\p$ has no nontrivial retracts
(see Ishiguro~\cite{Isr}).
\item Assume $|W(G)|\equiv 0\ \text{mod} \ p$ .  If a self-map 
$(BG)\p \lra (BG)^p$
is not null homotopic, it is a homotopy equivalence (see
M{\o}ller~\cite{Moeb}).
\item Assume $|W(G)|\equiv 0\ \text{mod} \ p$, and let $K$ be a compact Lie
group. If
a map $f\co (BG)\p \lra (BK)\p$ is trivial in mod $p$ cohomology, then $f$
is null homotopic (see Ishiguro~\cite{Is}).
\end{enumerate}

Replacing $G$ by $\Gamma_2$ at $p=3$,  we will see that (3) still holds.  On the
other hand it is not known if (1) and (2) hold, though on the level of K-theory
they do.

\section{$\Pi$--compact toral groups}

Recall that a finite group $\gamma$ is  {\it $p$--nilpotent} if and only if $\gamma$ is
expressed as the semidirect product $\nu \rtimes \gamma_p$,
where the normal $p$--complement
$\nu$ is the subgroup generated by all elements of order prime to $p$,
and where $\gamma_p$ is the $p$--Sylow subgroup.  For such
a group $\gamma$, we see $(B\gamma)\p \simeq
B\gamma_p$.  For a finite group $G$, one can show that
$\mathbb{P} (BG) =\{ p \in \Pi \ | \ G\ \text{is $p$--nilpotent} \} $.  Consequently, if
$G=\Sigma_n$, the symmetric group on $n$ letters, then $\mathbb{P} (B\Sigma_2) = \Pi$,
$\mathbb{P} (B\Sigma_3) =\Pi -\{ 3\}$, and $\mathbb{P} (B\Sigma_n) =\{ p \in \Pi \ | \ p>n \}$
for $n\ge 4$.

In \cite{He}, Henn provides a generalized
definition of $p$--nilpotence for compact Lie groups.  
A compact Lie group G is \emph{$p$--nilpotent} if and only if the
connected component of the identity, $G_0$, is a torus; the finite group
$\pi_0 G$ is p-nilpotent, and the cojugation action of the normal
$p$--complement is trivial on $T$.  We note that such a $p$--nilpotent group need not be
semidirect product.

Let $\gamma = \pi_0G$.  Then, from the inclusion $\gamma_p \lra \gamma$, a subgroup $G_p$
of $G$ is obtained as follows:
$$
\CD
T @>>> G @>>> \gamma    \\
@|         @AAA   @AAA          \\
T @>>> G_p @>>> \gamma_p 
\endCD 
$$
A result of Henn \cite{He} shows $(BG)\p \simeq (BG_p)\p$ if and
only if the compact Lie group $G$ is $p$--nilpotent.

\begin{lem}
A classifying space $BG$ is $p$--compact toral if and
only if the compact Lie group $G$ is $p$--nilpotent.
\end{lem}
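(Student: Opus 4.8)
The plan is to derive the lemma from the two results already in hand: the characterization of $p$--compact toral classifying spaces in \cite[Theorem~2]{It}, and Henn's criterion \cite{He}, which states that $(BG)\p \simeq (BG_p)\p$ if and only if $G$ is $p$--nilpotent. The whole argument will reduce to comparing $(BG)\p$ with $(BG_p)\p$, where $G_p$ is the subgroup introduced by the pullback square above, with $(G_p)_0=G_0$ and $\pi_0(G_p)=\gamma_p$. Throughout I would write $\gamma=\pi_0G$ and fix a $p$--Sylow subgroup $\gamma_p$ with normal $p$--complement $\nu$.

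For the ``if'' direction, suppose $G$ is $p$--nilpotent, so that $G_0=T$ is a torus, $\gamma$ is $p$--nilpotent, and $\nu$ acts trivially on $T$. Henn's criterion then gives $(BG)\p \simeq (BG_p)\p$, so it suffices to show that $BG_p$ is $p$--compact toral. Since $\pi_0(G_p)=\gamma_p$ is a finite $p$--group, the fibration $BT \lra BG_p \lra B\gamma_p$ is preserved by $p$--completion by the result of Bousfield and Kan \cite{BK} quoted above, yielding a fibration $(BT)\p \lra (BG_p)\p \lra B\gamma_p$. This exhibits $\Omega (BG_p)\p \simeq \Omega (BG)\p$ as an extension of the $p$--compact torus $T\p$ by the finite $p$--group $\gamma_p$, whence $BG$ is $p$--compact toral.

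For the ``only if'' direction, suppose $BG$ is $p$--compact toral. By \cite[Theorem~2]{It} I immediately obtain that $G_0=T$ is a torus, that $\gamma$ is $p$--nilpotent, and that $BT \lra BG \lra B\gamma$ is preserved by $p$--completion; using $(B\gamma)\p \simeq B\gamma_p$ this is a fibration $(BT)\p \lra (BG)\p \lra B\gamma_p$. The inclusion $G_p \hookrightarrow G$ then produces a map from the fibration $(BT)\p \lra (BG_p)\p \lra B\gamma_p$ to this one which is the identity on the fiber $(BT)\p$ (both fibers being $B G_0=BT$, and $G_p \hookrightarrow G$ being the identity on $G_0$) and, on the base, the equivalence $B\gamma_p \lra (B\gamma)\p$ coming from the isomorphism $\gamma_p \hookrightarrow \gamma \twoheadrightarrow \gamma/\nu \cong \gamma_p$. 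By comparison of fibrations the induced map $(BG_p)\p \lra (BG)\p$ is an equivalence, and Henn's criterion then forces $G$ to be $p$--nilpotent.

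I expect the last step to be the crux. The point is that \cite[Theorem~2]{It} delivers the first two clauses of Henn's definition of $p$--nilpotence, namely that $G_0$ is a torus and $\pi_0 G$ is $p$--nilpotent, but gives no direct control over the third clause, the triviality of the action of $\nu$ on $T$. The only leverage on that action is the homotopy type of $(BG)\p$, so the triviality has to be extracted indirectly by identifying $(BG)\p$ with $(BG_p)\p$ and invoking Henn's characterization. Making that identification rigorous---in particular verifying that the base map is an equivalence and the fiber map the identity, so that the comparison of fibrations applies---will be the technical heart of the argument.
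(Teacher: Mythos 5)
Your proposal is correct and takes essentially the same approach as the paper: both directions hinge on comparing the $p$--completed fibration $(BT)\p \lra (BG)\p \lra (B\pi_0G)\p$ with the fibration $(BT)\p \lra (BG_p)\p \lra B\gamma_p$ supplied by Bousfield--Kan, so as to identify $(BG)\p$ with $(BG_p)\p$ and invoke Henn's criterion. The paper's own proof performs exactly this fibration comparison, with the equivalence $(B\gamma_p)\p \simeq (B\gamma)\p$ for $p$--nilpotent $\gamma=\pi_0G$ playing the role of your base-map identification, so there is no substantive difference.
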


\begin{proof}
If $BG$ is $p$--compact toral, we see from \cite[Theorem~2]{It} that the fibration 
$BT \lra BG \lra B\pi_0G$ is preserved by the $p$--completion.  Let $\pi =\pi_0G$.
Then we obtain the following commutative diagram:
$$
\CD
(BT)\p @>>> (BG)\p @>>> (B\pi)\p   \\
 @|  @AAA   @AAA                  \\
(BT)\p @>>> (BG_p)\p @>>> (B\pi_p)\p
\endCD
$$
By \cite[Theorem~2]{It}, the finite group $\pi$ is $p$--nilpotent, so the map 
$(B\pi_p)\p \lra (B\pi)\p$ is homotopy equivalent.  Thus $(BG)\p \simeq (BG_p)\p$,and hence the result of \cite{He} implies that $G$ is $p$--nilpotent.
Conversely, if $G$ is $p$--nilpotent, then the following commutative diagram
$$
\CD
BT @>>> BG @>>> B\pi  \\
 @|  @AAA   @AAA                  \\
BT @>>> BG_p @>>> B\pi_p
\endCD
$$
tells us that $BT \lra BG \lra B\pi$ is $p$--equivalent to
the fibration
$$(BT)\p \lra (BG_p)\p \lra (B\pi_p)\p.$$
From \cite[Theorem~2]{It}, we see that $BG$ is $p$--compact toral. 
\end{proof}

\begin{proof}[Proof of \fullref{thm1}]
First suppose $BG$ is $\Pi$--compact toral.  Lemma 2.1 implies that
$G_0$ is a torus $T$ and $G/G_0=\pi_0G$ is $p$--nilpotent for any $p$.  
According 
to \cite[Lemma~2.1]{It}, the group $\pi_0G$ must be nilpotent.  
We notice that
for each $p$ the normal $p$--complement of $\pi_0G$ acts trivially on $T$.  
Thus
$\pi_0G$ itself acts trivially on $T$, and $T$ is a central 
subgroup of $G$.
Conversely, assume that conditions (a) and (b) hold.  According 
to \cite[Proposition~1.3]{He}, we see that $G$ is $p$--nilpotent for any $p$.  Therefore
$BG$ is $\Pi$--compact toral.
\end{proof}

We will show that a group which satisfies conditions (a) and (b) of
\fullref{thm1}
need not be a product group.  For instance, consider the quaternion group $Q_8$ in $SU(2)$.
Recall that the group can be presented as $Q_8=\langle x, y\ |\ x^4=1,
x^2=y^2, yxy^{-1}=x^{-1}\rangle$.
Let $\rho\co Q_8 \lra U(2)$ be a faithful representation given by the following:
$$
\begin{matrix}
\rho (x) =\begin{pmatrix} i &0\\
 0&-i\end{pmatrix} &,\ \ \ \rho (y)=\begin{pmatrix} 0&-1\\ 
1&0 \end{pmatrix}
\end{matrix}
$$
\noindent Let $S$ denote the center of
the unitary group $U(2)$ and let $G$ be the subgroup of $U(2)$ generated by
$\rho (Q_8 )$ and $S$.  Then we obtain the group extension
$S \lra G \lra \Z /2 \oplus \Z /2$.  Since $S\cong S^1$, this group $G$
satisfies conditions (a) and (b).  On the other hand, we see that
the non--abelian group $G$ can not be a product group.  This result can be
generalized as follows:

\begin{prop}
\label{prop2.1}
Suppose $\rho \co \pi \lra U(n)$ is a faithful irreducible representation
for a non--abelian finite nilpotent group $\pi$.  Let $S$ be the center of
the unitary group $U(n)$ and let $G$ be the subgroup of $U(n)$ generated by
$\rho (\pi )$ and $S$ with group extension $S \lra G \lra \pi_0G$.  Then
this extension does not split, and $G$ satisfies conditions (a) and (b)
of \fullref{thm1}.
\end{prop}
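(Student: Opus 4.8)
The plan is to verify conditions (a) and (b) directly from the structure of $G$, and then to extract the non--splitting from a single commutator computation. First I would observe that since $S$ is the centre of $U(n)$ it is central in $G$, so $G=\rho(\pi)\cdot S$ is a genuine product of subgroups and $G/S\cong\rho(\pi)/(\rho(\pi)\cap S)$ is finite. As $S\cong S^1$ is connected, this forces $G_0=S$, a one--dimensional torus $T$, giving the torus part of (a); and because $S$ is central, condition (b) is immediate. To identify $\pi_0G$ I would invoke Schur's lemma: since $\rho$ is irreducible, an element $\rho(g)$ lies in $S$ (that is, is scalar) precisely when it commutes with all of $\rho(\pi)$, and by faithfulness this happens exactly when $g\in Z(\pi)$. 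Hence $\rho(\pi)\cap S=\rho(Z(\pi))$ and $\pi_0G\cong\pi/Z(\pi)$, which is a quotient of a nilpotent group and therefore nilpotent; this completes (a).

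For the non--splitting I would argue by contradiction. Since $S$ is central, any splitting of $S\lra G\lra\pi_0G$ would exhibit $G$ as an internal direct product $G\cong S\times\Gamma$, with $\Gamma$ a finite subgroup isomorphic to $\pi/Z(\pi)$ and $S\cap\Gamma=\{1\}$. The key step is the commutator computation: writing each $\rho(g)$ as $(s_g,\gamma_g)\in S\times\Gamma$, the commutativity and centrality of $S$ give $\rho([g,h])=[\rho(g),\rho(h)]=(1,[\gamma_g,\gamma_h])\in\Gamma$ for all $g,h\in\pi$. Thus the whole commutator subgroup satisfies $\rho([\pi,\pi])\subseteq\Gamma$.

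The contradiction then arises from the interplay between the commutator subgroup and the centre. Because $\pi$ is non--abelian nilpotent of class $c\ge 2$, the last nontrivial term $\gamma_c(\pi)$ of its lower central series lies both in $[\pi,\pi]$ and in $Z(\pi)$, so I may choose $z\in[\pi,\pi]\cap Z(\pi)$ with $z\ne 1$. Then $\rho(z)\in S$ because $z$ is central, while $\rho(z)\in\Gamma$ by the previous paragraph; hence $\rho(z)\in S\cap\Gamma=\{1\}$, contradicting the faithfulness of $\rho$ together with $z\ne 1$. I expect this final point---producing a nontrivial element of $[\pi,\pi]\cap Z(\pi)$---to be where all the hypotheses are genuinely used, since nilpotency (rather than mere non--abelianness) is exactly what forces such an element to exist; for a general non--abelian group the commutator subgroup may meet the centre trivially. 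The quaternion group $Q_8$ discussed just before the proposition is precisely the class--two prototype of this phenomenon.
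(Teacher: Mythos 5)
Your proof is correct, and your verification of conditions (a) and (b) --- identifying $G_0=S$, invoking Schur's lemma to get $\rho(\pi)\cap S=\rho(Z(\pi))$ and hence $\pi_0G\cong\pi/Z(\pi)$ nilpotent --- is essentially the paper's. Where you genuinely diverge is the non--splitting argument. Both proofs first reduce a hypothetical splitting to an internal direct product $G\cong S\times\Gamma$ (centrality of $S$ trivializes the action), but the paper then argues ``upward'': since $\pi_0G\cong\pi/Z(\pi)$ is a nontrivial nilpotent group it has nontrivial center, and any element $y_0=(s_0,x_0)\in\pi$ lying over a non--identity $x_0\in Z(\pi_0G)$ commutes coordinatewise with every element of $\pi$, hence lies in $Z(\pi)=S\cap\rho(\pi)$, which maps to the identity of $\pi_0G$ --- contradicting $x_0\ne e$. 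You argue ``downward'': the coordinatewise commutator computation gives $\rho([\pi,\pi])\subseteq\Gamma$, and nilpotency of $\pi$ itself (the last nontrivial term of the lower central series is central) supplies $z\ne 1$ in $[\pi,\pi]\cap Z(\pi)$, so $\rho(z)\in S\cap\Gamma=\{1\}$ contradicts faithfulness. The two mechanisms exploit nilpotency of different groups: the paper needs $Z(\pi/Z(\pi))\ne 1$, you need $[\pi,\pi]\cap Z(\pi)\ne 1$. The former implies the latter (commuting an element of the second center against anything lands in $[\pi,\pi]\cap Z(\pi)$), so your variant is marginally more general --- it would apply, for instance, to a perfect group with nontrivial center, where the quotient by the center may be centerless --- though for the nilpotent groups covered by the proposition both arguments work equally well and at the same cost. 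Your closing observation is also accurate: for a group like $\Sigma_3$, with trivial center, no such $z$ exists, which is consistent with the fact that the corresponding extension there does split.
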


\begin{proof}
First we show that $G$ satisfies conditions (a) and (b)
of \fullref{thm1}.
Since $\pi$ is nilpotent, so is the finite group $\pi_0G \cong G/S$.  
Recall that the center of the unitary group $U(n)$ consists of scalar matrices, and is
isomorphic to $S^1$.  Thus we obtain the desired result.

Next we show that the group extension $S \lra G \lra \pi_0G$ does not split.
If this extension did split, then we would have $G \cong S \rtimes \pi_0G$.  Since the
action of $\pi_0G$ on the center $S$ is trivial, it follows that $G$ is isomorphic to
the product group $S \times \pi_0G$.  Let $Z(\pi )$ denote the center of $\pi$.
Since the representation $\rho \co \pi \lra U(n)$ is irreducible and
faithful, Schur's Lemma
implies $S\cap \rho (\pi ) =Z(\rho (\pi )) \cong Z(\pi )$.  Thus we obtain
the following commutative diagram:
$$
\CD
S @>>> G @>>> \pi_0G    \\
 @AAA   @AAA    @|              \\
Z(\pi ) @>>> \pi @>q >> \pi_0G
\endCD
$$
Regarding $\pi$ as a subgroup of $G=S \times \pi_0G$, an element $y\in \pi$ can be
written as $y=(s, x)$ for $s\in S$ and $x\in \pi_0G$.  Notice that $\pi_0G$ is nilpotent
and this group has a non--trivial center, since $\pi$ is non--abelian.  
The map $q \co \pi \lra \pi_0G$ is an epimorphism.  Consequently we can find an element $y_0=(s_0, x_0)$
where $s_0 \in S$ and $x_0$ is a non--identity element of $Z(\pi_0G )$.  This means
that $y_0$ is contained in $Z(\pi )$, though $q(y_0)$ is a non--identity element.  This 
contradiction completes the proof. 
\end{proof}

\section{$\Pi$--compact subgroups of simple Lie groups }

We will need the following results to prove \fullref{thm2}.

\begin{lem}
\label{lem3.1}
Let $K$ be a compact Lie group, and let $G$ be a connected compact Lie group.  
If $(BK)\p \simeq (BG)\p$ for some $p$, we have a group extension as follows:
$$1\lra W(K_0)\lra W(G) \lra \pi_0K \lra 1$$
\end{lem}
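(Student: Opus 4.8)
The plan is to reduce the statement to the single claim that $W(K)\cong W(G)$, where $W(K)=N_K(T)/T$ is the Weyl group of the (possibly disconnected) group $K$ and $T$ is a maximal torus of $K$; the extension then comes for free from a purely group--theoretic sequence attached to $K$. Indeed, since $K_0$ is normal the torus $T$ is also maximal in $K_0$, and for any $k\in K$ the conjugate $kTk^{-1}$ is a maximal torus of $K_0$, so by conjugacy of maximal tori there is $g\in K_0$ with $gk\in N_K(T)$; hence $N_K(T)\lra \pi_0K$ is onto, with kernel $N_K(T)\cap K_0=N_{K_0}(T)$. Dividing $1\lra N_{K_0}(T)\lra N_K(T)\lra \pi_0K\lra 1$ by $T$ gives
\[
1\lra W(K_0)\lra W(K)\lra \pi_0K\lra 1 ,
\]
and substituting the isomorphism $W(K)\cong W(G)$ into the middle term is precisely the asserted extension.

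For the isomorphism $W(K)\cong W(G)$ I would pass to the theory of $p$--compact groups. As $G$ is connected, $(BG)\p$ is simply connected, hence so is $(BK)\p$, and $X:=\Omega (BK)\p\simeq \Omega (BG)\p = G\p$ is a \emph{connected} $p$--compact group; by \cite[Proposition~3.1]{It}, applied to the now $p$--compact space $BK$, the Sylow $p$--subgroup of $\pi_0K$ is trivial, so $p\nmid |\pi_0K|$. The Weyl group of a connected $p$--compact group is an invariant of the group, since its maximal torus and the normalizer of that torus are unique up to conjugacy \cite{DWb}. The inclusion $T\hookrightarrow K$ furnishes a map $(BT)\p\lra (BK)\p\simeq BX$ which I would identify as a maximal torus of $X$; uniqueness of maximal tori then gives $\rank (K_0)=\rank (G)$ and an identification of the two weight lattices over $\Z\p$. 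On the $G$--side this invariant is the classical $W(G)$, while the normalizer $N_K(T)$, together with the full--rank connected subgroup $K_0$, should realize $W(K)$ as the same invariant, yielding $W(K)\cong W(G)$.

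The step I expect to be the main obstacle is making the $K$--side identification precise, that is, checking that the $p$--completed Lie normalizer really presents the $p$--compact normalizer of $X$ with Weyl group $W(K)$, since both $\pi_0K$ and part of $W(K)$ may have order prime to $p$ and so are invisible to a naive $p$--completion. I would handle the prime--to--$p$ contribution by transfer: because $p\nmid |\pi_0K|$ one has $H^*(BK;\F_p)\cong H^*(BK_0;\F_p)^{\pi_0K}$, and this must agree with $H^*(BG;\F_p)$. The connected monomorphism $K_0\p\hookrightarrow X$ of equal rank contributes $W(K_0)\subseteq W_X=W(G)$, and the Euler characteristic formula for $p$--compact homogeneous spaces gives $\chi (X/K_0\p)=[W(G):W(K_0)]$; the crux is to evaluate this index as $|\pi_0K|$, which cannot be read off directly, because the fibration $BK_0\lra BK\lra B\pi_0K$ is not preserved by $p$--completion. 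Once the numerical equality $|W(G)|=|W(K_0)|\,|\pi_0K|=|W(K)|$ is established, and each class of $\pi_0K$ is realized in $W_X$ by a representative of $N_K(T)$ acting faithfully on the $\Z\p$--lattice, the inclusion $W(K)\hookrightarrow W(G)$ becomes an isomorphism, completing the reduction above.
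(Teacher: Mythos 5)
Your proposal is a reduction, not a proof: everything is funneled into the single claim $W(K)\cong W(G)$ (equivalently $[W(G):W(K_0)]=|\pi_0K|$), and you explicitly leave that claim open (``the crux is to evaluate this index\dots which cannot be read off directly''). This gap is genuine, and no argument can close it from the stated hypotheses alone, because in that generality the claim is false. Take $K=G\times\Z/q$ with $q$ a prime different from $p$, say $K=SU(2)\times\Z/3$, $G=SU(2)$, $p=2$. Then $B\Z/q$ is mod $p$ acyclic, so the projection $BK\lra BG$ is a mod $p$ homology equivalence and $(BK)\p\simeq (BG)\p$; your first step (which is correct) gives the extension $1\lra W(K_0)\lra W(K)\lra \pi_0K\lra 1$, but here $W(K)=W(G)\times\Z/q$, which is not isomorphic to $W(G)$. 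What is actually needed, and what your outline never secures, is that $\pi_0K$ acts faithfully on the maximal torus, that is $C_K(T)=T$; this fails in the example, is not implied by the hypotheses, but does hold in the only situation where the paper applies the lemma, namely full--rank subgroups $H\le G$ (there $C_H(T)\subseteq C_G(T)=T$). In fairness, the paper's own proof uses this faithfulness tacitly as well: Galois theory by itself identifies $W(G)/W(K_0)$ with the \emph{image} of $\pi_0K$ in $\mathrm{Aut}\bigl(H^*((BK_0)\p;\Q)\bigr)$, not with $\pi_0K$ itself.

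Beyond the gap, your route through $p$--compact group theory (maximal tori of $X$, uniqueness of normalizers, Euler characteristics of $X/(K_0)\p$) is quite different from the paper's, and the paper's route contains exactly the counting device you are missing. The paper works in rational cohomology: $H^*((BG)\p;\Q)=H^*((BT_G)\p;\Q)^{W(G)}$, while by transfer $H^*((BK)\p;\Q)=H^*((BK_0)\p;\Q)^{\pi_0K}=\bigl(H^*((BT)\p;\Q)^{W(K_0)}\bigr)^{\pi_0K}$; equating these two invariant rings and applying Galois theory for invariant rings (Smith) to the resulting tower yields in one stroke that $W(K_0)$ is normal in $W(G)$ and that the quotient is (the image of) $\pi_0K$ --- the index $[W(G):W(K_0)]$ is computed as the degree of a field extension. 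Notice that you did write down the transfer isomorphism yourself, but with $\F_p$--coefficients, where invariant theory provides no such count; the missing idea is to pass to $\Q\p$--coefficients, where Weyl--group invariant theory and Galois theory behave classically, rather than to $p$--compact group machinery. Your group--theoretic first step ($N_K(T)\lra\pi_0K$ is onto, giving $1\lra W(K_0)\lra W(K)\lra\pi_0K\lra 1$) is correct and clean, but the paper's argument does not need it.
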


\begin{proof}
It is well--known that
$H^*((BG)\p ; \Q) = H^*((BT_G)\p ; \Q)^{W(G)}$, and
since $(BK)\p \simeq (BG)\p$, it follows that
$H^*((BG)\p ; \Q) = H^*((BK)\p ; \Q)$.
Notice that
$H^*((BK)\p ; \Q) = \smash{H^*((BK_0)\p ; \Q)^{\pi_0K}}
  = \smash{(H^*((BT_{K_0})\p ; \Q)^{W(K_0)})^{\pi_0K}}$.
Galois theory 
for the invariant rings (see Smith \cite{Si}) tells us that
$W(K_0)$ is a normal subgroup of $W(G)$ and that the quotient group $W(G)/W(K_0)$
is isomorphic to $\pi_0K$.  This completes the proof.  
\end{proof}

\begin{lem}
\label{lem3.2}
For a compact Lie group $K$, suppose
the loop space of the $p$--completion $\Omega (BK)\p$ is a connected $p$--compact group.  Then
$p$ doesn't divide the order of $\pi_0K$.
\end{lem}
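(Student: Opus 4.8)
The plan is to convert the connectivity hypothesis on $\Omega(BK)\p$ into the vanishing of a fundamental group and then to read off the conclusion from the survey result \cite[Proposition~3.1]{It} quoted above. First I would observe that, by the definition recalled in the introduction, the assumption that $\Omega(BK)\p$ is a $p$--compact group is exactly the assertion that $BK$ is $p$--compact. This is precisely the hypothesis under which \cite[Proposition~3.1]{It} applies, and its part (b) tells us that $\pi_1((BK)\p)$ is isomorphic to a $p$--Sylow subgroup of the finite group $\pi_0 K$.

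Next I would use the elementary relation $\pi_0(\Omega X)\cong\pi_1(X)$ applied to $X=(BK)\p$. Since $\Omega(BK)\p$ is assumed to be connected, its set of path components is trivial, so $\pi_1((BK)\p)=0$. Feeding this back into part (b) of the proposition forces a $p$--Sylow subgroup of $\pi_0 K$ to be the trivial group. A finite group has trivial $p$--Sylow subgroup if and only if its order is prime to $p$, and therefore $p$ does not divide $|\pi_0 K|$, which is the desired conclusion.

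The argument is immediate once Proposition~3.1 is invoked, so I do not expect a genuine obstacle. The only points requiring a little care are, first, checking that the slightly repackaged hypothesis (``$\Omega(BK)\p$ is a $p$--compact group'') is indeed the same as ``$BK$ is $p$--compact,'' so that the cited proposition is applicable; and second, interpreting the connectivity of the \emph{loop space} correctly as the vanishing of $\pi_1$ of the \emph{classifying space} rather than of the loop space itself.
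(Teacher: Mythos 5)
Your proof is correct, but it takes a different route through the cited background than the paper does. You invoke part (b) of \cite[Proposition~3.1]{It} directly: connectivity of $\Omega (BK)\p$ gives $\pi_1((BK)\p )=\pi_0(\Omega (BK)\p )=0$, and since part (b) identifies $\pi_1((BK)\p )$ with a $p$--Sylow subgroup of $\pi_0K$, that Sylow subgroup is trivial and $p \nmid |\pi_0K|$. The paper instead uses only part (a): from $p$--nilpotence of $\pi_0K$ it gets $(B\pi_0K)\p \simeq B\pi$ for $\pi$ a $p$--Sylow subgroup, notes that $(BK)\p$ is $1$--connected so that any map $(BK)\p \lra B\pi$ into an aspherical target is null, and then observes that the map in question is induced by the epimorphism $K \lra \pi_0K$, which forces $\pi$ to be trivial. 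Your argument is shorter and essentially a one-step deduction, at the cost of leaning on the stronger statement (b), which packages the identification of $\pi_1$ of the completion with the Sylow subgroup; the paper's version redoes that mechanism explicitly (completion of the classifying space of a $p$--nilpotent group, plus the null-map argument against a surjection), so it is self-contained modulo only the $p$--nilpotence criterion. Both proofs are valid, and the two points of care you flag --- that the hypothesis is literally the definition of $BK$ being $p$--compact, and that connectivity of the loop space means vanishing of $\pi_1$ of the completed classifying space rather than of the loop space --- are exactly the right ones.
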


\begin{proof}
Since $BK$ is $p$--compact, $\pi_0K$ is $p$--nilpotent.  So, if $\pi$ denotes a $p$--Sylow
subgroup of $\pi_0K$, then $(B\pi_0K)\p \simeq B\pi$.  Notice that $(BK)\p$ is 1--connected.
Hence the map $(BK)\p \lra (B\pi_0K)\p$ induced from the epimorphism $K \lra \pi_0K$
is a null map.  Consequently the $p$--Sylow subgroup $\pi$ must be trivial.
\end{proof}

For $K=NT$, the normalizer of a maximal torus $T$ of a 
connected compact simple Lie group,
the converse of \fullref{lem3.2} is true, though it doesn't hold in general.  Note that $\pi_0\Gamma_2 =\Z/2$
and that $B\Gamma_2$ is not $3$--compact \cite{Ig2}.

\begin{proof}[Proof of \fullref{thm2}]
(1)\qua Since $(BH)\p \simeq (BG)\p$ for some $p$, \fullref{lem3.1} says that the Weyl group $W(H_0)$
is a normal subgroup of $W(G)$.  First we show that $W(H_0) \ne W(G)$.  If
$W(H_0) = W(G)$, the inclusion $H_0 \lra G$ induces the isomorphism
$H^*(BH_0 ; \Q) \cong H^*(BG ; \Q)$, since $\rank(H_0)=\rank(G)$.
Hence $BH_0 \simeq_0 BG$.  Consequently if $\wt{H}_0$ and $\wt{G}$ denote
the universal covering groups of $H_0$ and $G$ respectively, then $\wt{H}_0 \cong \wt{G}$.
The maps $B\wt{H}_0 \lra BH_0$ and $B\wt{G} \lra BG$ are rational
equivalences.  According
to \cite[Lemma~2.2]{Ia}, we would see that $H_0 = H = G$.  Since $H$ must be
a proper subgroup of $G$, we obtain the desired result.

We now see that $W(H_0)$ is a proper normal subgroup of $W(G)$.  If $W(H_0)$ is a
nontrivial group, a result of Asano~\cite{A} implies that $(G, H_0)$ is one of the following types:
$$
(G, H_0)= \begin{cases}
(B_n, D_n) \\
(C_n, A_1 \times \cdots \times A_1) \\
(G_2, A_2) \\
(F_4, D_4)
\end{cases}
$$ 
According to \cite[Lemma~2.1 and Proposition~3.1]{It}, we
notice $\pi_0H = W(G)/W(H_0)$ is a nilpotent group
since $BH$ is $\Pi$--compact.  Recall that 
$W(C_n)/W(A_1 \times \cdots \times A_1) \cong \Sigma_n$ and
$W(F_4)/W(D_4) \cong \Sigma_3$.
For $n \ge 3$, we notice that the symmetric group $\Sigma_n$ is not nilpotent.
Hence the group
$W(G)/W(H_0)$ is nilpotent when $W(G)/W(H_0)  \cong \Z /2$.  Consequently, we see the following:
$$
(G, H_0)=
\left\{
\aligned &(B_n, D_n) \\
&(C_2, A_1 \times A_1) \\
&(G_2, A_2) 
\endaligned
\right.
$$ 
It remains to consider the case that $W(H_0)$ is a trivial group.  In this case
$\pi_0H = W(G)$, and $W(G)$ is a nilpotent group.  From
\cite[Proposition~3.4]{It},
we see that $G=A_1\ \text{or}\ B_2(=C_2)$.

(2)\qua We first show that, for any odd prime $p$, 
all types of the pairs are realized for some $G$ and $H$.  To begin with,
we consider the case $(G, T_G)$ for $G=A_1$.  Take $(G, H)=(\SO(3),
\OO(2))$.  Since
$\pi_0(\OO(2))=\Z /2$ and $B\OO(2) \simeq_p B\SO(3)$ for odd prime $p$, the space
$B\OO(2)$ is $\Pi$--compact.  In the case $G=B_2$, take $(G, H)=(G, NT_G)$
for $G=\Spin(5)$.  Then
$\pi_0H$ is a $2$--group and $BNT_G \simeq_p BG$ for odd prime $p$, and hence
$BNT_G$ is $\Pi$--compact.

In the case of $(B_n, D_n)$, take $(G, H)=(\SO(2n+1), \OO(2n))$.  Since
$\pi_0(\OO(2n))=\Z /2$ and $B\OO(2n) \simeq_p B\SO(2n+1)$ for odd prime $p$, the space
$B\OO(2n)$ is $\Pi$--compact.  For $(C_2, A_1 \times A_1)$, take $G=\Sp(2)$ and
$H=(\Sp(1) \times \Sp(1))\rtimes \Z /2\langle a \rangle$ where 
$a=\bigl(\begin{smallmatrix}  0 &1\\ 1&0 \end{smallmatrix}\bigr)  \in \Sp(2)$.  For complex
numbers $z$ and $w$, we see that
$$
\begin{matrix} 
\begin{pmatrix}  0 &1\\ 1&0 \end{pmatrix}  \begin{pmatrix}  z&0\\ 0&w
\end{pmatrix} 
\begin{pmatrix} 0 &1\\ 1&0\end{pmatrix}
&= \begin{pmatrix} w&0\\ 0& z \end{pmatrix}
\end{matrix}.
$$
\noindent Thus the action of $\Z /2\langle a\rangle$ is given by
$\bigl(\begin{smallmatrix}  0 &1\\ 1&0 \end{smallmatrix}\bigr) $.  We note that
$$ W(\Sp(2))=D_8=\left\langle \begin{matrix} 
\begin{pmatrix}  -1 &0\\ 0&1 \end{pmatrix},
\begin{pmatrix}  1 &0\\ 0&-1 \end{pmatrix}, \begin{pmatrix}  0 &1\\ 1&0 
\end{pmatrix} 
\end{matrix} \right\rangle.
$$
Consequently $\pi_0H$ is a $2$--group and $BH \simeq_p BG$ for odd prime
$p$, and hence $BH$ is $\Pi$--compact.  Finally, for $(G_2, A_2)$, as
mentioned in the introduction, take $G=G_2$ and $H=SU(3)\rtimes \Z/2$.
Then $BH$ is $\Pi$--compact.

It remains to consider the case $p=2$.  Note that $|W(G)/ W(H_0)|$ for
each of such $(G, H_0)$'s is a power of $2$.  \fullref{lem3.1} implies
that the finite group $\pi_0H$ must be a 2--group.  \fullref{lem3.2} says
that $|\pi_0H|$ is not divisible by $2$, since $(BH)\2 \simeq (BG)\2$.
Thus $H$ is connected, and hence $H=G$.  This completes the proof.
\end{proof}

Any proper closed subgroup of $G$ which includes the normalizer $NT$
satisfies the assumption of \fullref{thm2}.  So, this theorem shows,
once again, that almost all $BNT$ are not $\Pi$--compact \cite{It}.
Furthermore, for any connected compact Lie group $G$, it is well-known
that $(BNT)\p \simeq (BG)\p$ if $p$ does not divide the order of the Weyl
group $W(G)$, hence $BNT$ is $p$--compact for such $p$.  The converse
is shown in \cite{It}.

\begin{lem}
\label{lem3.3}
Let $\alpha \lra \wt G \lra G$ be a central extension of compact Lie groups.  
Then $BG$ is $p$--compact if and only if $B\wt G$ is $p$--compact.
\end{lem}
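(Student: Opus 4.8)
The plan is to compare the two loop spaces through the fibration of classifying spaces attached to the central extension. From $\alpha \lra \wt G \lra G$ one gets a fibration $B\alpha \lra B\wt G \lra BG$, and since $\alpha$ is central the conjugation action is trivial, so $\pi_1(BG)=\pi_0G$ acts trivially on the fibre $B\alpha$. Hence the mod $p$ fibre lemma of Bousfield and Kan \cite{BK} applies and the $p$--completion
$$
(B\alpha)\p \lra (B\wt G)\p \lra (BG)\p
$$
is again a fibration. Looping it gives a principal fibration
$$
\alpha\p \lra \Omega(B\wt G)\p \lra \Omega(BG)\p ,
$$
whose fibre $\alpha\p=\Omega(B\alpha)\p$ is a $p$--compact toral group and in particular $\F_p$--finite. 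Before exploiting this I would reduce $\alpha$: writing the compact abelian group as $\alpha=T_\alpha\times\alpha_{p'}\times\alpha_p$ (a torus, a finite group of order prime to $p$, and a finite $p$--group) and factoring the extension accordingly, the stage dividing out $\alpha_{p'}$ is a mod $p$ equivalence on classifying spaces because $B\alpha_{p'}$ is $\F_p$--acyclic. So neither $p$--compactness condition changes, and I may assume $\alpha=T_\alpha\times\alpha_p$.

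One implication is immediate. If $BG$ is $p$--compact, then in the looped fibration both the fibre $\alpha\p$ and the base $\Omega(BG)\p$ are $\F_p$--finite, so the Serre spectral sequence, with $E_2$--term finite in each total degree, forces $\Omega(B\wt G)\p$ to be $\F_p$--finite; thus $B\wt G$ is $p$--compact.

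The reverse implication is the main obstacle. Here I know only that $\Omega(B\wt G)\p$ is $\F_p$--finite and must deduce the same for the quotient $\Omega(BG)\p=\Omega(B\wt G)\p/\alpha\p$. The naive spectral sequence is useless in this direction, since a quotient of an $\F_p$--finite space by a free finite action need not be $\F_p$--finite (as $S^\infty\lra \mathbb{R}P^\infty$ shows). What I would use instead is that $\alpha\p$ occurs as a genuine central subgroup of the $p$--compact group $\Omega(B\wt G)\p$, inherited from the honest central inclusion $\alpha\le\wt G$, and then invoke the quotient theory of $p$--compact groups of Dwyer and Wilkerson \cite{DWb}, by which the quotient of a $p$--compact group by a central $p$--compact toral subgroup is again $p$--compact. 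The delicate point is to verify that $\alpha\p\lra\Omega(B\wt G)\p$ really is a monomorphism of $p$--compact groups and not merely a formal fibre inclusion; this is exactly where the Lie--theoretic input is unavoidable, because the homotopy fibre of $(B\alpha)\p\lra(B\wt G)\p$ is the very space $\Omega(BG)\p$ we are trying to control.

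To break that apparent circularity I would descend to identity components and to the prime--to--$p$ part of $\pi_0$. Since $BG$ can be $p$--compact only when $\pi_0G$ is $p$--nilpotent \cite[Proposition~3.1]{It}, the finiteness question reduces to that of the homotopy fixed points of the prime--to--$p$ action of $\pi_0$ on the connected $p$--compact groups $(\wt G_0)\p$ and $(G_0)\p$; and the central isogeny $(\wt G_0)\p\lra (G_0)\p$ alters these fixed points only by the $\F_p$--finite central factor. This makes the inclusion of $\alpha\p$ an effective monomorphism, so the quotient theorem applies and yields that $\Omega(BG)\p$ is $\F_p$--finite, i.e.\ that $BG$ is $p$--compact. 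I expect the bookkeeping for non--connected $G$, keeping track of how $\pi_0\wt G$ and $\pi_0G$ and their $p$--nilpotence correspond under the central extension, to be the most technical part of the argument.
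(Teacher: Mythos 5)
Your fibration setup and your forward implication coincide with the paper's. You obtain the completed fibration $(B\alpha)\p \lra (B\wt G)\p \lra (BG)\p$ from the Bousfield--Kan fibre lemma, using that centrality makes the $\pi_0G$--action on $B\alpha$ trivial; the paper gets the same fibration slightly differently, by extending the principal fibration to $B\wt G \lra BG \lra K(\alpha ,2)$ and completing over the $1$--connected base. The deduction that $BG$ $p$--compact implies $B\wt G$ $p$--compact, via the Serre spectral sequence of the looped fibration with $\F_p$--finite fibre and base, is exactly the paper's argument; your preliminary splitting off of $\alpha_{p'}$ is harmless but unnecessary.

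The reverse implication is where you have a genuine gap, and it sits precisely at the point you yourself flag as delicate. The paper's entire argument there is one step: the map $(B\alpha)\p \lra (B\wt G)\p$ is a monomorphism of $p$--compact groups because it is induced by an inclusion of compact Lie groups, so its homotopy fibre --- which the completed fibration identifies with $\Omega (BG)\p$ --- is $\F_p$--finite by the very definition of monomorphism. Nothing more is needed; in particular your appeal to the Dwyer--Wilkerson quotient theorem is redundant, since $\F_p$--finiteness of the fibre is already the desired conclusion. What you offer in place of this step is not a proof. First, you invoke ``$BG$ can be $p$--compact only when $\pi_0G$ is $p$--nilpotent,'' which is a necessary condition of the statement being proved; this is repairable ($\pi_0G$ is a quotient of the $p$--nilpotent group $\pi_0\wt G$), but as written the logic runs backwards. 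More seriously, the proposed ``reduction to homotopy fixed points of the prime--to--$p$ action of $\pi_0$'' is unjustified, and the key claim that the central isogeny ``alters these fixed points only by the $\F_p$--finite central factor'' is essentially the lemma itself restated, not an argument for it. Indeed, $p$--completion does not interact in any simple way with homotopy orbits or fixed points of prime--to--$p$ groups; that failure is exactly the phenomenon this paper studies ($B\Gamma_2$ at $p=3$ passes every such naive invariant--theoretic test yet is not $3$--compact \cite{Ig2}), so no correct proof can pass through that reduction cheaply. Finally, the circularity you fear is not actually there: the monomorphism property can be verified without any reference to $BG$. Since $\Omega (B\alpha)\p$ is $p$--compact toral, by the kernel theory for homomorphisms out of $p$--compact toral groups it is a monomorphism into the $p$--compact group $\Omega (B\wt G)\p$ as soon as no nontrivial subgroup of $\alpha$ maps trivially, and this is detected Lie--theoretically: for a finite $p$--subgroup $\pi \le \alpha$ one has $[B\pi ,(B\wt G)\p]\cong \mathrm{Rep}(\pi ,\wt G)$ (Dwyer--Zabrodsky), so injectivity of $\alpha \hookrightarrow \wt G$ forces the composite to be non--null, and similarly for subtori (Notbohm). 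It is this detection, not any fixed--point analysis, that underwrites the paper's one--line claim.
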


\begin{proof}
First assume that $BG$ is $p$--compact.  
Since $\alpha \lra \wt G \lra G$ is a central extension, the fibration
$B\alpha \lra B\wt G \lra BG$ is principal.
Thus we obtain a fibration $B\wt G \lra BG \lra K(\alpha ,2)$.  
The base space is $1$--connected,
so the fibration is preserved by the $p$--completion, and hence we obtain
the fibration
$$(B\alpha )\p \lra (B\wt G)\p \lra (BG)\p.$$
Since the loop spaces
$\Omega (B\alpha )\p$ and $\Omega (BG)\p$ are $\F_p$--finite, so is
$\smash{\Omega (B\wt G)\p}$.
Thus $\smash{B\wt G}$ is $p$--compact.

Conversely we assume that $B\wt G$ is $p$--compact.  Consider 
the fibration
$$\Omega (BG)\p \lra (B\alpha )\p \lra (B\wt G)\p.$$
Since the map
$(B\alpha )\p \lra (B\wt G)\p$ is induced from the inclusion $\alpha \hookrightarrow \wt G$,
it is a monomorphism of $p$--compact groups.  Hence its homotopy fiber $\Omega (BG)\p$ is
$\F_p$--finite, and therefore $BG$ is $p$--compact.
\end{proof}

\begin{cor}
Let $\alpha \lra \wt G \lra G$ be a central extension of compact Lie groups, and let
$H$ be a subgroup of $G$ so that there is the commutative diagram:
$$
\CD
\alpha @>>> \wt G @>>> G \\
 @|  @AAA   @AAA               \\
\alpha @>>> \wt H @>>> H
\endCD
$$
\noindent Then the pair $(G, H)$ satisfies the conditions of
\fullref{thm2}
if and only if so does the pair $(\wt G, \wt H)$.
\end{cor}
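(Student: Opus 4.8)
The plan is to check that each of the three standing hypotheses of \fullref{thm2} — that $G$ be connected compact simple, that $H$ be a proper closed subgroup with $\rank(H_0)=\rank(G)$, and that $BH \lra BG$ be $p$--equivalent for some prime $p$ — transfers between $(G,H)$ and its cover $(\wt G, \wt H)$. Since $\alpha$ is a finite central subgroup, the projection $\wt G \lra G$ is a finite covering, so $\wt G$ and $G$ share the same Lie algebra; hence $\wt G$ is connected compact simple exactly when $G$ is. From the commutative diagram of the corollary, $\wt H$ is the induced subgroup, that is, the preimage of $H$ under $\wt G \lra G$; it is therefore closed, and since $\wt G \lra G$ is surjective with kernel $\alpha \subset \wt H$ we have $\wt H = \wt G$ if and only if $H = G$, so properness passes both ways. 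As $\wt H_0 \lra H_0$ is again a finite covering, $\rank(\wt H_0)=\rank(H_0)$ and $\rank(\wt G)=\rank(G)$, so the rank condition is preserved. All of this is routine.

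The substance lies in the third hypothesis, where I would argue both implications simultaneously. Restricting $\alpha \lra \wt G \lra G$ gives the compatible central extension $\alpha \lra \wt H \lra H$, so exactly as in the proof of \fullref{lem3.3} the principal fibrations $B\alpha \lra B\wt G \lra BG$ and $B\alpha \lra B\wt H \lra BH$ yield fibrations
$$B\wt G \lra BG \lra K(\alpha ,2) \qquad\text{and}\qquad B\wt H \lra BH \lra K(\alpha ,2)$$
whose classifying maps to $K(\alpha ,2)$ commute with the inclusion $BH \lra BG$, because the extension class of $\alpha \lra \wt H \lra H$ is the restriction along $BH \lra BG$ of that of $\alpha \lra \wt G \lra G$. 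As $K(\alpha ,2)$ is $1$--connected, each fibration is preserved by the $p$--completion, and since $K(\alpha ,2)\p = K(\alpha\p ,2)$ I obtain a map of fibration sequences over a common base:
$$
\CD
(B\wt H)\p @>>> (BH)\p @>>> K(\alpha\p ,2)   \\
@VVV @VVV @|                  \\
(B\wt G)\p @>>> (BG)\p @>>> K(\alpha\p ,2)
\endCD
$$

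Finally, applying the five lemma to the long exact homotopy sequences of the two fibrations in this diagram — which have the same base and the identity map on it — shows that the map of total spaces $(BH)\p \lra (BG)\p$ is an equivalence if and only if the map of fibers $(B\wt H)\p \lra (B\wt G)\p$ is, for each fixed prime $p$. Hence $BH \lra BG$ is $p$--equivalent precisely when $B\wt H \lra B\wt G$ is, with the same prime, and the third hypothesis transfers in both directions. Combined with the elementary verifications of the first two, this gives the corollary. I expect the one delicate point to be the compatibility of the two extension classes over $BH \lra BG$ — equivalently, checking that the map induced on the $p$--completed fibers really is the inclusion--induced map $(B\wt H)\p \lra (B\wt G)\p$ — since everything else is then a formal consequence of the fibration comparison.
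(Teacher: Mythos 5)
Your handling of the three standing hypotheses of \fullref{thm2} is correct, and your argument for transferring the $p$--equivalence is essentially the paper's own argument in a shifted form: the paper compares the $p$--completed fibration sequences $(B\alpha)\p \lra (B\wt H)\p \lra (BH)\p$ and $(B\alpha)\p \lra (B\wt G)\p \lra (BG)\p$ with the identity on the fiber $(B\alpha)\p$, while you compare the same fiber sequences one step to the right, over the common base $K(\alpha\p,2)$; the five--lemma argument is the same either way. The paper's choice of segment has one small advantage: every map in its ladder is induced directly by a homomorphism in the commutative diagram of group extensions, so the ``delicate point'' you flag at the end --- identifying the induced map on fibers with the inclusion--induced map $(B\wt H)\p \lra (B\wt G)\p$ --- simply does not arise.

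There is, however, a condition the paper transfers that you do not: the $\Pi$--compactness of $BH$, i.e.\ the hypothesis appearing in part (a) of \fullref{thm2}. The first line of the paper's proof is that $BH$ is $\Pi$--compact if and only if $B\wt H$ is, by applying \fullref{lem3.3} to the central extension $\alpha \lra \wt H \lra H$ one prime at a time; indeed this is precisely why \fullref{lem3.3} is proved immediately before the corollary. This is also where the corollary earns its keep in the paper: it is what allows one to conclude, for instance, that $B\Pin(2n)$ is $\Pi$--compact because $B\OO(2n)$ is, so that the double cover $(\Spin(2n+1),\Pin(2n))$ is again an example of type $(B_n,D_n)$. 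Your narrower reading of ``the conditions of \fullref{thm2}'' as only the standing hypotheses is literally defensible, but it empties the corollary of this intended consequence. Fortunately the repair is a single sentence: for each prime $p$, \fullref{lem3.3} applied to $\alpha \lra \wt H \lra H$ shows $BH$ is $p$--compact if and only if $B\wt H$ is, so $\Pi$--compactness passes between $(G,H)$ and $(\wt G,\wt H)$ in both directions.
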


\begin{proof}
\fullref{lem3.3} implies that $BH$ is $\Pi$--compact if and only if
$B\wt H$ is $\Pi$--compact.  It is clear that $\rank(H_0)=\rank(G)$
if and only if $\rank(\wt H_0)=\rank(\wt G)$.  Finally we see
$(BH)\p \simeq (BG)\p$ if and only if $(B\wt H)\p \simeq (B\wt G)\p$ from the following commutative 
diagram of fibrations:
$$
\CD
(B\alpha )\p @>>> (B\wt G)\p @>>> (BG)\p \\
 @|  @AAA   @AAA               \\
(B\alpha )\p @>>> (B\wt H)\p @>>> (BH)\p
\endCD
$$
This completes the proof.
\end{proof}

\begin{lem}
\label{lem3.4}
Let $M \lra K \lra L$ be a short exact sequence of groups.  If $\nu$
is a normal subgroup of $K$, the kernel $\nu '$ of the composition $\nu
\lra K \lra L$ is a normal subgroup of $M$.
\end{lem}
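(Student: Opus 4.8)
The plan is to identify the subgroup $\nu'$ explicitly and then invoke two elementary facts about normal subgroups. First I would observe that the short exact sequence $M \lra K \lra L$ exhibits $M$ as the kernel of the surjection $q\co K \lra L$; in particular $M$ is a normal subgroup of $K$ and $L \cong K/M$. The composition $\nu \lra K \lra L$ appearing in the statement is simply the restriction of $q$ to $\nu$, whose kernel is $\{x \in \nu : q(x)=1\} = \nu \cap \ker q = \nu \cap M$. Thus $\nu' = \nu \cap M$, and the whole lemma reduces to understanding the intersection of two normal subgroups.

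Next I would show that $\nu'$ is normal in $K$. Both $\nu$ and $M$ are normal subgroups of $K$: the former by hypothesis, the latter because it is the kernel of $q$. For any $k \in K$ and any $x \in \nu \cap M$, the conjugate $kxk^{-1}$ lies in $\nu$ and in $M$ separately, hence in $\nu \cap M$. So $\nu' = \nu \cap M$ is normal in $K$.

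Finally, since $\nu' \subseteq M \subseteq K$ and $\nu'$ is normal in the larger group $K$, it is a fortiori normal in the subgroup $M$: for every $m \in M \subseteq K$ we have $m\nu'm^{-1} = \nu'$, which is exactly the assertion. There is no genuine obstacle here; the only point that needs care is the correct identification of $\nu'$ as $\nu \cap M$, which rests on recognizing $M$ as $\ker q$ in the given short exact sequence rather than as an abstract quotient of $K$.
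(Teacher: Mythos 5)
Your proof is correct and follows essentially the same route as the paper's: both identify $\nu'$ as $\nu \cap \ker q$ and verify closure under conjugation using the normality of $\nu$ in $K$ together with the fact that conjugates of kernel elements lie in the kernel. The only difference is cosmetic --- you conjugate by arbitrary elements of $K$ and thereby record the slightly stronger fact that $\nu'$ is normal in all of $K$, whereas the paper conjugates only by elements of $M$, which is all the lemma asks for.
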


\begin{proof}
We consider the following commutative diagram:
$$
\CD
\nu ' @>>> M \\
  @VVV   @VVV               \\
\nu  @>>> K \\
  @VVV   @VV q V               \\
q(\nu)  @>>> L
\endCD
$$
For $x \in \nu '$ and $m \in M$, it follows that
$$
\begin{array}{rcl}
 q(mxm^{-1})&=q(m)q(x)q(m^{-1})\\
 &=q(m)q(m)^{-1}=e
\end{array}
$$
Thus $mxm^{-1} \in \ker q$.  Since $\nu ' \subset \nu $, $M \subset K$, and
$\nu \vartriangleleft K$, we see that $mxm^{-1} \in \nu$.  So
$mxm^{-1} \in \ker q\cap \nu =\nu '$, and therefore $\nu ' 
\vartriangleleft M$. 
\end{proof}

\begin{proof}[Proof of \fullref{thm3}]
First suppose $(G, H_0)=(B_n, D_n)$ or $(C_2, A_1 \times A_1)$.  Let $\nu '$ be 
the kernel of the composition $\nu \lra H \lra \pi_0H$.  Consider the following commutative diagram:
$$
\CD
\nu ' @>>> H_0 \\
  @VVV   @VVV               \\
\nu  @>>> H \\
  @VVV   @VV q V               \\
q(\nu)  @>>> \pi_0H
\endCD
$$
\fullref{lem3.4} says that $\nu ' \vartriangleleft H_0$.  Since $\nu '$ is a finite normal subgroup of $H_0$, 
it is a finite $2$--group.  As we have seen in the proof of \fullref{thm2}, $\pi_0H = W(G)/W(H_0)$ is a
$2$--group, and hence so is $q(\nu)$.  Consequently $\nu$ is a $2$--group.

Now consider the following commutative diagram:
$$
\CD
\nu ' @>>> \nu @>>> q(\nu)  \\
  @VVV   @VVV  @VVV               \\
H_0  @>>> H @>>> \pi_0H \\
  @VVV   @VVV @VVV              \\
\Gamma_0  @>>> \Gamma @>>> \pi_0 \Gamma
\endCD
$$
Since $\pi_0 \Gamma$ is a $2$--group, the fibration $B\Gamma_0  \lra
B\Gamma \lra B\pi_0 \Gamma$ is preserved by the $2$--completion
(see Bousfield and Kan \cite{BK}).  Hence $B\Gamma$ is $2$--compact.
Next, for odd prime $p$, we see that $(B\Gamma)\p \simeq (BH)\p$,
since $\nu$ is a $2$--group.  We see also that $G$ has no odd torsion
and $H^*(BH ; \F_p) = H^*(BH_0 ; \F_p)^{\pi_0H} \cong H^*(BG ; \F_p)$.
Consequently the space $(B\Gamma)\p$ is homotopy equivalent to $(BG)\p$.
Therefore $B\Gamma$ is $\Pi$--compact.

It remains to consider the case $(G, H_0)=(G, T_G)$ for $G=A_1$ or
$G=B_2(=C_2)$.  Since $H_0=T_G$ and $H_0 \vartriangleleft H$, we see
that $H$ is a subgroup of the normalizer $NT_G$.  Consider the following
commutative diagram:
$$
\CD
T_G @>>> NT_G @>>> W(G) \\
 @|  @AAA   @AAA               \\
T_G @>>> H @>>> \pi_0H
\endCD
$$
Since the map $BH \lra BG$ is $p$--equivalent for some $p$, it follows
that $\pi_0H=W(G)$.  Consequently $H=NT_G$.

If $\nu$ is a finite normal subgroup of $NT_G$, then $B\nu$ is contained
in the kernel of the map $(BG)\p \simeq (BNT_G)\p \lra (B\Gamma)\p$.
Since $G$ is simple and $G\ne G_2$, according to \cite{Is,Ir}, the group
$\nu$ is included in the center of $G$.  Thus $\nu$ is a $2$--group.
Therefore $(B\Gamma)\p \simeq (BNT_G)\p \simeq (BG)\p$ for odd prime $p$,
and hence $B\Gamma$ is $p$--compact for such $p$.  Finally we note that
$W(G)$ is a $2$--group, and hence $B\Gamma$ is $2$--compact.
\end{proof}

We will discuss a few more results.  Basically we have been looking
at three Lie groups $H_0 \subset H \subset G$.  The following shows a
property of the (non--connected) middle group $H$.

\begin{prop}
\label{prop3.1}
Suppose $G$ is a connected compact Lie group, and $H$ is a proper closed subgroup of $G$
with $\rank(H_0)=\rank(G)$.  If the order of $\pi_0H$ is divisible by a prime $p$,
so is the order of $W(G)/W(H_0)$.
\end{prop}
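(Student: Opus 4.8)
The plan is to realize $\pi_0 H$ as a subquotient of $W(G)$ whose denominator is exactly $W(H_0)$, so that $|\pi_0 H|$ divides $|W(G)/W(H_0)|$ by an elementary index computation. The hypothesis $\rank(H_0)=\rank(G)$ enters right at the start: I would fix a maximal torus $T$ of $H_0$, and since $T\subset H_0\subset G$ is a torus of dimension $\rank(H_0)=\rank(G)$, it is automatically a maximal torus of $G$ as well. This lets me compute both Weyl groups with the same torus, so that $N_{H_0}(T)=N_H(T)\cap H_0$ and $W(H_0)=N_{H_0}(T)/T\subset N_H(T)/T\subset N_G(T)/T=W(G)$.

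The main step is to describe $\pi_0 H$ via this common torus. Because $H_0\vartriangleleft H$, for every $h\in H$ the subgroup $hTh^{-1}$ is again a maximal torus of the connected group $H_0$; by conjugacy of maximal tori in $H_0$ there is $h_0\in H_0$ with $h_0 h T h^{-1}h_0^{-1}=T$, so that $h_0 h\in N_H(T)$ and hence $h\in H_0\,N_H(T)$. This gives the decomposition $H=H_0\,N_H(T)$, and the second isomorphism theorem then yields
$$\pi_0 H=H/H_0\cong N_H(T)/(N_H(T)\cap H_0)=N_H(T)/N_{H_0}(T).$$
Dividing numerator and denominator by $T$ (noting $N_{H_0}(T)\vartriangleleft N_H(T)$, since $H_0\vartriangleleft H$) identifies
$$\pi_0 H\cong (N_H(T)/T)\big/(N_{H_0}(T)/T)=W_H/W(H_0),$$
where $W_H:=N_H(T)/T$ is a subgroup of $W(G)$ containing $W(H_0)$.

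Finally I would apply Lagrange's theorem to the tower $W(H_0)\leq W_H\leq W(G)$, obtaining
$$|W(G)/W(H_0)|=[W(G):W_H]\cdot[W_H:W(H_0)]=[W(G):W_H]\cdot|\pi_0 H|.$$
Thus $|\pi_0 H|$ divides $|W(G)/W(H_0)|$, and in particular any prime $p$ dividing $|\pi_0 H|$ divides $|W(G)/W(H_0)|$, which is the assertion. The one point needing care — the main obstacle — is establishing $H=H_0\,N_H(T)$ and the consequent identification of $\pi_0 H$ with $W_H/W(H_0)$; once that is in hand, the divisibility is just the multiplicativity of indices.
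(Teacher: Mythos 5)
Your proof is correct, but it takes a genuinely different route from the paper's. The paper argues by contraposition using the Sylow theory of $p$--toral subgroups from Jackowski--McClure--Oliver: assuming $[W(G):W(H_0)]$ is prime to $p$, a maximal $p$--toral subgroup of $G$ (hence of $H$) can be conjugated into $H_0$, so every $p$--toral subgroup $K \subset H$ lands in $H_0$ up to conjugacy, the composite $K \hookrightarrow H \to \pi_0H$ is trivial, and therefore the $p$--part of $\pi_0H$ vanishes. You instead stay entirely inside classical Lie/group theory: the Frattini-type argument (conjugacy of maximal tori in $H_0$ together with $H_0 \vartriangleleft H$) gives $H = H_0\,N_H(T)$, whence $\pi_0H \cong W_H/W(H_0)$ with $W(H_0) \leq W_H \leq W(G)$, and multiplicativity of indices finishes the job. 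Your argument is more elementary (no appeal to the compact Lie group Sylow theorem) and proves something strictly stronger than the stated proposition, namely that $|\pi_0H|$ divides $[W(G):W(H_0)]$, not merely that the two orders share each prime divisor of $|\pi_0H|$; it also exhibits $\pi_0H$ explicitly as a subquotient of $W(G)$, which dovetails with the extension $1\to W(K_0)\to W(G)\to \pi_0K\to 1$ of Lemma 3.1 (in that lemma's situation one has $W_H = W(G)$). What the paper's approach buys in exchange is consistency with the $p$--local machinery used throughout (the same $p$--toral technology reappears in the remarks after the proposition about $2$--stubborn subgroups and $2$--compact analogues), whereas your identification of $\pi_0 H$ is purely group-theoretic and would not directly generalize to $p$--compact-group versions of the statement. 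All the delicate points in your write-up check out: a torus of full rank in $G$ is maximal, $N_H(T)\cap H_0 = N_{H_0}(T)$ is normal in $N_H(T)$ because $H_0$ is normal in $H$, and the index computation needs no normality of $W(H_0)$ in $W(G)$ since the "order of $W(G)/W(H_0)$" is just the index.
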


\begin{proof}
Assuming $|W(G)/W(H_0)| \not \equiv 0\ \text{mod}\ p$, we will show 
$\pi_0H \not \equiv 0\ \text{mod}\ p$.  Notice that we have the following commutative diagram
$$
\CD
T @>>> N_GT @>>> W(G) \\
 @|  @AAA   @AAA               \\
T @>>> N_{H_0}T @>>> W(H_0),
\endCD 
$$
where the vertical maps are injective, since
$\rank(H_0)=\rank(G)$.  We recall, from Jackowski, McClure
and Oliver~\cite{JMO}, that the Sylow theorem for compact Lie groups
$G$ holds.  Namely $G$ contains maximal $p$--toral subgroups, and all of
which are conjugate to $N_pT$, where $N_p(T)/T$ is a $p$--Sylow subgroup
of $N(T)/T=W(G)$.

Suppose $K$ is a $p$--toral subgroup of $H$.  Since $|W(G)/W(H_0)| \not \equiv 0\ \text{mod}\ p$,
we see that $K$ is a subgroup of $H_0$ up to conjugate.  Consequently, the composite map
$K \hookrightarrow H \lra \pi_0H$ must be homotopy equivalent to a null map.  Since
$H \lra \pi_0H$ is surjective, the $p$--part of $\pi_0H$ is trivial.
\end{proof}

For each pair mentioned in the part (a) of \fullref{thm2}, we note that $|W(G)/W(H_0)|$ is a power
of $2$.  \fullref{prop3.1} says, for instance, that $\pi_0H$ is a $2$--group for any $(G, H)$
such that $|W(G)/W(H_0)|$ is a power of $2$.  As an application, one can show that if $H$ is
a non--connected proper closed subgroup of $\SO(3)$ with $H_0=\SO(2)$, then $H$ is
isomorphic to $\OO(2)$.  A proof may use the fact that $H$ is $2$--toral, and that a
maximal $2$--toral subgroup in $H$ is $2$--stubborn \cite{JMO}.  A $2$--compact version
of this result also holds.  Suppose $X$ is a $2$--compact group such that
there are two monomorphisms of $2$--compact groups
$B\SO(2)\2 \lra BX$ and $BX \lra B\SO(3)\2$.  Then, along the line of a similar argument,
one can also show that $BX$ is homotopy equivalent to $B\OO(2)\2$ if $X$ is not connected.
In the case of $X$ being connected, the classifying space $BX$ is either
$B\SO(2)\2$ or $B\SO(3)\2$.

In \fullref{thm2}, Lie groups of type $(C_2, A_1 \times A_1)$ has been discussed.  An example is given
by $\Sp(1) \times \Sp(1) \subset (\Sp(1) \times \Sp(1))\rtimes \Z /2
\subset \Sp(2)$.  The middle
group can be regarded as the wreath product $\Sp(1)\smallint \Sigma_n$ for $n=2$.  We ask
for what $n$ and $p$ its classifying space is $p$--compact.  Note that $\Sp(1)
\smallint \Sigma_n$
is a proper closed subgroup of $\Sp(n)$.

\begin{prop}
Let $\Gamma (n)$ denote the wreath product $\Sp(1)\smallint \Sigma_n$.  Then
$$
\mathbb{P} (B\Gamma (n)) =
\begin{cases}
\Pi &\text{if } n=2\\
\{ p \in \Pi ~|~ p>n \}  &\text{if } n\ge 3
\end{cases}
$$
\end{prop}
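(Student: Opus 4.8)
The plan is to handle $n=2$ and $n\ge 3$ separately, viewing $\Gamma(n)=\Sp(1)^n\rtimes\Sigma_n$ as a closed subgroup of $\Sp(n)$ containing the standard maximal torus $T^n$, so that $\Gamma(n)_0=\Sp(1)^n$, $\pi_0\Gamma(n)=\Sigma_n$, and $N_{\Gamma(n)}(T^n)/T^n=(\Z/2)^n\rtimes\Sigma_n=W(\Sp(n))$. For $n=2$ the group $\Gamma(2)=(\Sp(1)\times\Sp(1))\rtimes\Z/2$ is precisely the subgroup $H$ of $\Sp(2)$ of type $(C_2, A_1\times A_1)$ constructed in \fullref{thm2}, where $BH$ is shown to be $\Pi$--compact; thus $\mathbb{P}(B\Gamma(2))=\Pi$.

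For $n\ge 3$ I would prove the two inclusions of $\mathbb{P}(B\Gamma(n))=\{p\mid p>n\}$ in turn. If $p>n$ then $p\nmid |\Sigma_n|=n!$, so the Serre spectral sequence of $B\Sp(1)^n\lra B\Gamma(n)\lra B\Sigma_n$ collapses and $H^*(B\Gamma(n);\F_p)=\F_p[x_1,\dots,x_n]^{\Sigma_n}$ with $|x_i|=4$, the ring of symmetric polynomials. Since the symplectic Pontryagin classes of $B\Sp(n)$ restrict to the elementary symmetric functions, the inclusion $B\Gamma(n)\lra B\Sp(n)$ is a mod $p$ cohomology isomorphism; as both spaces are $\F_p$--good, $(B\Gamma(n))\p\simeq(B\Sp(n))\p$, whence $\Omega(B\Gamma(n))\p\simeq\Sp(n)\p$ is $\F_p$--finite and $B\Gamma(n)$ is $p$--compact.

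Conversely, if $B\Gamma(n)$ is $p$--compact then \cite[Proposition~3.1]{It} forces $\Sigma_n=\pi_0\Gamma(n)$ to be $p$--nilpotent. For $n\ge 4$ this already gives $p>n$, since $\mathbb{P}(B\Sigma_n)=\{p>n\}$. For $n=3$, however, $\Sigma_3$ is $p$--nilpotent for every $p\ne 3$, so the $\pi_0$--condition only excludes $p=3$, and it remains to show that $B\Gamma(3)$ is not $2$--compact.

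This last step is where I expect the real difficulty. Suppose $B\Gamma(3)$ were $2$--compact. By \cite[Proposition~3.1]{It}, $\pi_1((B\Gamma(3))\2)$ is the $2$--Sylow subgroup $\Z/2$ of $\Sigma_3$, so the identity component of the $2$--compact group $\Omega(B\Gamma(3))\2$ is a connected $2$--compact group $Y_0$ with $BY_0$ the universal cover of $(B\Gamma(3))\2$. To identify this cover, compose with the sign homomorphism to obtain $\Gamma(3)\lra\Z/2$ with kernel $\Gamma'=\Sp(1)^3\rtimes A_3$, where $A_3\cong\Z/3$ permutes the three factors cyclically, together with a fibration $B\Gamma'\lra B\Gamma(3)\lra B\Z/2$. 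Since $\pi_1(B\Z/2)=\Z/2$ is a $2$--group it acts unipotently on $H_*(B\Gamma';\F_2)$, so by the mod $2$ fiber lemma of Bousfield and Kan \cite{BK} the fibration is preserved by $2$--completion and $BY_0\simeq(B\Gamma')\2$. Following the computation in the proof of \fullref{lem3.1}, and using that $|A_3|=3$ is prime to $2$, one gets $H^*((B\Gamma')\2;\Q)=(H^*((BT^3)\2;\Q)^{(\Z/2)^3})^{A_3}=\Q[t_1,t_2,t_3]^{(\Z/2)^3\rtimes A_3}$. The group $(\Z/2)^3\rtimes A_3$ is not generated by pseudoreflections---its pseudoreflections are exactly the sign changes in $(\Z/2)^3$, while the $3$--cycle fixes only a line---so by the Shephard--Todd--Chevalley theorem (see Smith \cite{Si}) this invariant ring is not a polynomial algebra. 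This contradicts the fact that $H^*(BY_0;\Q)$ is a polynomial algebra for a connected $p$--compact group (Dwyer--Wilkerson \cite{DWb}). Hence $B\Gamma(3)$ is not $2$--compact, giving $\mathbb{P}(B\Gamma(3))=\{p>3\}$ and completing the case $n\ge 3$.
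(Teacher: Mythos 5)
Your proposal is correct and follows essentially the same route as the paper: the case $n=2$ is quoted from the proof of part (b) of \fullref{thm2}, the converse direction uses $p$--nilpotence of $\pi_0$ via \cite[Proposition~3.1]{It}, and the crucial exclusion of $p=2$ for $n=3$ is exactly the paper's argument---pass to the index-two subgroup $\Sp(1)^3\rtimes\Z/3$, observe that the fibration over $B\Z/2$ is preserved by $2$--completion, and derive a contradiction because the invariant ring $\Q[t_1,t_2,t_3]^{(\Z/2)^3\rtimes\Z/3}$ (the paper writes the same ring as $H^*(B\Sp(1)^3;\Q\2)^{\Z/3}$ on degree-four generators) is not polynomial by Shephard--Todd (see Smith \cite{Si}), contradicting \cite[Theorem~9.7]{DWb}. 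The only difference is cosmetic and occurs in the easy direction $p>n$: you compute $H^*(B\Gamma(n);\F_p)$ via the Serre spectral sequence and compare symplectic Pontryagin classes, whereas the paper simply notes that $\Gamma(n)$ contains the normalizer of a maximal torus of $\Sp(n)$ and invokes $(BNT)\p \simeq (B\Sp(n))\p$ when $p$ does not divide $|W(\Sp(n))|$; both yield $B\Gamma(n)\simeq_p B\Sp(n)$.
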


\begin{proof}
When $n=2$, the desired result has been shown in our proof of the part (b)
of \fullref{thm2}.  Recall
from \cite{It} that if $B\Gamma (n)$ is $p$--compact, then $\pi_0B\Gamma (n)=\Sigma_n$ must be
$p$--nilpotent.  For $n \ge 4$, it follows that $\Sigma_n$ is $p$--nilpotent if and only if $p>n$.
Since the group $\Gamma (n)$ includes the normalizer of a maximal torus of
$\Sp(n)$, we see 
$B\Gamma (n) \simeq_{p} BSp(n)$ if $p>n$.  Thus $\mathbb{P} (B\Gamma (n)) =\{ p \in \Pi \ | \ p>n \}$
for $n \ge 4$.

For $n=3$, note that $\Sigma_3$ is $p$--nilpotent if and only if $p\ne 3$.  So it remains to 
prove that $B\Gamma (3)$ is not $2$--compact.  We consider a subgroup $H$ of $\Gamma (3)$
which makes the following diagram commutative: 
$$
\CD
\displaystyle \prod_{}^3Sp(1) @= \displaystyle \prod_{}^3Sp(1) @>>> * \\
 @VVV @VVV @VVV              \\
H @>>> \Gamma (3) @>>> \Z /2 \\
 @VVV @VVV @|                 \\
\Z /3 @>>> \Sigma_3 @>>> \Z /2
\endCD
$$
The fibration $BH \lra B\Gamma (3) \lra B\Z /2$ is preserved by the
completion at $p=2$.  Hence, if $B\Gamma (3)$ were $2$--compact, the
space $\Omega (BH)\2$ would be a connected $2$--compact group so that the
cohomology $H^*(BH ; \Q \2)$ should be a polynomial ring, (see Dwyer and
Wilkerson~\cite[Theorem~9.7]{DWb}).  Though $H^*\bigl(B\prod_{}^3Sp(1)
; \Q \2\bigr)$ is a polynomial ring, its invariant ring $H^*(BH;\Q\2)=
H^*\bigl(B\prod_{}^3Sp(1) ; \Q \2\bigr)^{\Z /3}$ is not a polynomial
ring, since the group $\Z /3$ is not generated by reflections.
This contradiction completes the proof.
\end{proof}

For $(G, H)=(\Sp(n), \Sp(1)\smallint \Sigma_n)$, we note that $(G, H_0)$
is a type of $(C_n, A_1 \times \cdots \times A_1)$.  This is one of the
cases that the Weyl group $W(H_0)$ is a normal subgroup of $W(G)$ (see
Asano~\cite{A}) discussed in our proof of the part (a) of \fullref{thm2}.
Finally we talk about the only remaining case $(G, H_0)=(F_4, D_4)$.
An example is given by $\Spin(8) \subset \Spin(8)\rtimes \Sigma_3 \subset
F_4$.  Let $\Gamma$ denote the middle group $\Spin(8)\rtimes \Sigma_3$.
Then we can show that $\mathbb{P} (B\Gamma ) =\{ p \in  \Pi \ | \ p>3 \}$.
To show that $B\Gamma $ is not $2$--compact, one might use the fact,
(see Adams~\cite[Theorem~14.2]{Ad}), that $W(F_4)=W(\Spin(8)) \rtimes
\Sigma_3$, and that its subgroup $W(\Spin(8)) \rtimes \Z /3$ is not a
reflection group.

\bibliographystyle{gtart}
\bibliography{link}

\begin{thebibliography}{}
\providecommand\bibmarginpar{\leavevmode\marginpar}
\def\urlstyle#1{{\tt #1}}

\bibitem{Ad}
\textbf{J\,F Adams}, \emph{Lectures on exceptional {L}ie groups}, Chicago
  Lectures in Mathematics, University of Chicago Press, Chicago, IL (1996)
  \xox{MR}{1428422}

\bibitem{AGMV}
\textbf{K Andersen}, \textbf{J Grodal}, \textbf{J M{\o}ller}, \textbf{A
  Viruel}, \emph{The classification of $p$--compact groups for $p$ odd},
  preprint

\bibitem{A}
\textbf{H Asano}, \emph{On some normal subgroups of {W}eyl groups}, Yokohama
  Math. J. 13 (1965) 121--128 \xox{MR}{0212128}

\bibitem{Bourbaki}
\textbf{N Bourbaki}, \emph{Lie groups and {L}ie algebras. {C}hapters 1--3},
  Elements of Mathematics (Berlin), Springer, Berlin (1989) \xox{MR}{979493}

\bibitem{BK}
\textbf{A\,K Bousfield}, \textbf{D\,M Kan}, \emph{Homotopy limits, completions
  and localizations}, Lecture Notes in Mathematics 304, Springer, Berlin (1972)
  \xox{MR}{0365573}

\bibitem{BLO}
\textbf{C Broto}, \textbf{R Levi}, \textbf{B Oliver},
  \href{http://www.ams.org/jourcgi/jour-getitem?pii=S089403470300434X}
  {\emph{The homotopy theory of fusion systems}}, J. Amer. Math. Soc. 16 (2003)
  779--856 \xox{MR}{1992826}

\bibitem{BLOs}
\textbf{C Broto}, \textbf{R Levi}, \textbf{B Oliver}, \emph{The theory of
  $p$--local groups: a survey}, from: ``Homotopy theory: relations with
  algebraic geometry, group cohomology, and algebraic $K$--theory'', Contemp.
  Math. 346, Amer. Math. Soc., Providence, RI (2004)  51--84 \xox{MR}{2066496}

\bibitem{DWb}
\textbf{W\,G Dwyer}, \textbf{C\,W Wilkerson},
  \href{http://links.jstor.org/sici?sici=0003-486X(199403)2:139:2%3C395:HFMFL%
G%3E2.0.CO%3B2-3} {\emph{Homotopy fixed-point methods for {L}ie groups and
  finite loop spaces}}, Ann. of Math. $(2)$ 139 (1994) 395--442
  \xox{MR}{1274096}

\bibitem{DWgs}
\textbf{W\,G Dwyer}, \textbf{C\,W Wilkerson},
  \href{http://dx.doi.org/10.1112/S002460939800441X} {\emph{The elementary
  geometric structure of compact {L}ie groups}}, Bull. London Math. Soc. 30
  (1998) 337--364 \xox{MR}{1620888}

\bibitem{DWc}
\textbf{W\,G Dwyer}, \textbf{C\,W Wilkerson}, \emph{Centers and {C}oxeter
  elements}, from: ``Homotopy methods in algebraic topology (Boulder, CO,
  1999)'', Contemp. Math. 271, Amer. Math. Soc., Providence, RI (2001)  53--75
  \xox{MR}{1831347}

\bibitem{DWn}
\textbf{W\,G Dwyer}, \textbf{C\,W Wilkerson},
  \href{http://dx.doi.org/10.2140/gt.2005.9.1337} {\emph{Normalizers of tori}},
  Geom. Topol. 9 (2005) 1337--1380 \xox{MR}{2174268}

\bibitem{Gt}
\textbf{J Grodal}, \emph{The transcendence degree of the mod {$p$} cohomology
  of finite {P}ostnikov systems}, from: ``Stable and unstable homotopy
  (Toronto, ON, 1996)'', Fields Inst. Commun. 19, Amer. Math. Soc., Providence,
  RI (1998)  111--130 \xox{MR}{1622342}

\bibitem{Gs}
\textbf{J Grodal}, \emph{Serre's theorem and the Nil filtration of Lionel
  Schwartz}, from: ``Cohomological methods in homotopy theory (Bellaterra,
  1998)'', Progr. Math. 196, Birkh\"auser, Basel (2001)  177--183
  \xox{MR}{1851254}

\bibitem{He}
\textbf{H-W Henn}, \emph{Cohomological $p$--nilpotence criteria for compact
  {L}ie groups}, Ast\'erisque  (1990) 6, 211--220 \xox{MR}{1098971}

\bibitem{Isr}
\textbf{K Ishiguro}, \href{http://dx.doi.org/10.1016/0022-4049(87)90133-2}
  {\emph{Classifying spaces and $p$--local irreducibility}}, J. Pure Appl.
  Algebra 49 (1987) 253--258 \xox{MR}{920940}

\bibitem{Is}
\textbf{K Ishiguro}, \emph{Classifying spaces of compact simple {L}ie groups
  and $p$--tori}, from: ``Algebraic topology (San Feliu de Gu\'\i xols,
  1990)'', Lecture Notes in Math. 1509, Springer, Berlin (1992)  210--226
  \xox{MR}{1185971}

\bibitem{Ir}
\textbf{K Ishiguro}, \emph{Retracts of classifying spaces}, from: ``Adams
  Memorial Symposium on Algebraic Topology, 1 (Manchester, 1990)'', London
  Math. Soc. Lecture Note Ser. 175, Cambridge Univ. Press, Cambridge (1992)
  271--280 \xox{MR}{1170585}

\bibitem{Ia}
\textbf{K Ishiguro},
  \href{http://www.ams.org/jourcgi/jour-getitem?pii=S0002993996034971}
  {\emph{Classifying spaces and homotopy sets of axes of pairings}}, Proc.
  Amer. Math. Soc. 124 (1996) 3897--3903 \xox{MR}{1343701}

\bibitem{Ig2}
\textbf{K Ishiguro}, \emph{Classifying spaces and a subgroup of the exceptional
  {L}ie group {$G\sb 2$}}, from: ``Groups of homotopy self-equivalences and
  related topics (Gargnano, 1999)'', Contemp. Math. 274, Amer. Math. Soc.,
  Providence, RI (2001)  183--193 \xox{MR}{1817010}

\bibitem{It}
\textbf{K Ishiguro}, \emph{Toral groups and classifying spaces of $p$--compact
  groups}, from: ``Homotopy methods in algebraic topology (Boulder, CO,
  1999)'', Contemp. Math. 271, Amer. Math. Soc., Providence, RI (2001)
  155--167 \xox{MR}{1831352}

\bibitem{JMO}
\textbf{S Jackowski}, \textbf{J McClure}, \textbf{B Oliver},
  \href{http://links.jstor.org/sici?sici=0003-486X(199201)2:135:1%3C183:HCOSO%
B%3E2.0.CO%3B2-%23} {\emph{Homotopy classification of self-maps of {$BG$} via
  $G$--actions I, II}}, Ann. of Math. $(2)$ 135 (1992) 183--226, 227--270
  \xox{MR}{1147962}

\bibitem{Moeb}
\textbf{J\,M M{\o}ller}, \href{http://dx.doi.org/10.1016/0040-9383(94)00005-0}
  {\emph{Rational isomorphisms of $p$--compact groups}}, Topology 35 (1996)
  201--225 \xox{MR}{1367281}

\bibitem{Si}
\textbf{L Smith}, \emph{Polynomial invariants of finite groups}, Research Notes
  in Mathematics 6, A K Peters Ltd., Wellesley, MA (1995) \xox{MR}{1328644}

\end{thebibliography}

\end{document}